\newtheorem{theorem}{Theorem}
\newtheorem{lemma}[theorem]{Lemma}
\newtheorem{definition}[theorem]{Definition}
\newtheorem{rem}[theorem]{Remark}
\crefname{theorem}{theorem}{Theorems}
\crefname{lem}{Lemma}{Lemmas}
\crefname{cor}{Corollary}{Corollaries}
\crefname{prop}{Proposition}{Propositions}
\crefname{proposition}{Proposition}{Propositions}
\crefname{defn}{Definition}{Definitions}
\crefname{exm}{Example}{Examples}
\crefname{rem}{Remark}{Remarks}
\crefname{section}{Section}{Sections}
\crefname{equation}{\unskip}{\unskip}
\crefname{enumi}{\unskip}{\unskip}
\newcommand{\gen}[1]{\langle #1\rangle}
\newcommand{\red}[1]{\textcolor{black}{#1}}
\begin{document}

\noindent{\Large 
Transposed Poisson structures on Witt-type algebras}
 \footnote{
The  first part of work is supported by 
FCT   UIDB/00212/2020, UIDP/00212/2020, and 2022.02474.PTDC.
The second part of this work is supported by the Russian Science Foundation under grant 22-71-10001.
} 

	\bigskip
	
	 \bigskip

\begin{center}	
	{\bf
		Ivan Kaygorodov\footnote{CMA-UBI, Universidade da Beira Interior, Covilh\~{a}, Portugal; \
  Moscow Center for Fundamental and Applied Mathematics,      Russia; 
  \    Saint Petersburg  University, Russia;\  kaygorodov.ivan@gmail.com},
   Abror Khudoyberdiyev\footnote{V.I.Romanovskiy Institute of Mathematics Academy of Science of Uzbekistan; National University of Uzbekistan; \ 
khabror@mail.ru} \&
Zarina Shermatova\footnote{V.I.Romanovskiy Institute of Mathematics Academy of Science of Uzbekistan; Kimyo International University in Tashkent, Uzbekistan; \ 
z.shermatova@mathinst.uz}}   
\end{center}

 \bigskip
  
\noindent {\bf Abstract.}
{\it 
We compute  $\frac{1}{2}$-derivations on the deformative Schr\"{o}dinger-Witt  \footnote{We refer the notion of \textquotedblleft Witt\textquotedblright  \   algebra to the simple Witt algebra and the notion of \textquotedblleft Virasoro\textquotedblright  \  algebra to the central extension of the simple Witt algebra.}
 algebra, on not-finitely graded Witt algebras $W_n(G)$, 
 and on not-finitely graded Heisenberg-Witt algebra $HW_n(G)$.
We classify all transposed Poisson structures on
such algebras. 

}

 \bigskip
\noindent {\bf Keywords}: 
{\it    
Lie algebra; transposed Poisson algebra;  $\frac 1 2$-derivation.
}

 \bigskip
\noindent {\bf MSC2020}: 17A30, 17B40, 17B63.

 \bigskip
\section*{Introduction} 

Since their origin in the 1970s in Poisson geometry, Poisson algebras have appeared in several areas of mathematics and physics, such as algebraic geometry, operads, quantization theory, quantum groups, and classical and quantum mechanics. One of the natural tasks in the theory of Poisson algebras is the description of all such algebras with a fixed Lie or associative part.
	Recently, Bai, Bai, Guo,  and Wu have introduced a dual notion of the Poisson algebra~\cite{bai20}, called a transposed Poisson algebra, by exchanging the roles of the two multiplications in the Leibniz rule defining a Poisson algebra. A transposed Poisson algebra defined this way not only shares some properties of a Poisson algebra, such as the closedness under tensor products and the Koszul self-duality as an operad but also admits a rich class of identities \cite{kms,bai20,fer23,lb23}. It is important to note that a transposed Poisson algebra naturally arises from a Novikov-Poisson algebra by taking the commutator Lie algebra of its Novikov part \cite{bai20}.
 	Any unital transposed Poisson algebra is
	a particular case of a ``contact bracket'' algebra 
	and a quasi-Poisson algebra.
 Each transposed Poisson algebra is a 
 commutative  Gelfand-Dorfman algebra \cite{kms}
 and it is also an algebra of Jordan brackets \cite{fer23}.
Transposed Poisson algebras are related to weak Leibniz algebras \cite{dzhuma}.
	In a recent paper by Ferreira, Kaygorodov, Lopatkin
	a relation between $\frac{1}{2}$-derivations of Lie algebras and 
	transposed Poisson algebras has been established \cite{FKL}. 	These ideas were used to describe all transposed Poisson structures 
	on  Witt and Virasoro algebras in  \cite{FKL};
	on   twisted Heisenberg-Virasoro,   Schr\"odinger-Virasoro  and  
	extended Schr\"odinger-Virasoro algebras in \cite{yh21};
	on Schr\"odinger algebra in $(n+1)$-dimensional space-time in \cite{ytk};
\red{on solvable Lie algebra with filiform nilradical} in \cite{aae23};
on oscillator Lie algebras in \cite{kkh24};
	on Witt type Lie algebras in \cite{kk23};
	on generalized Witt algebras in \cite{kkg23}; 
 Block Lie algebras in \cite{kk22,kkg23}
  and
on    Lie incidence algebras (for all references, see the survey \cite{k23}).
  
 \medskip

The description of transposed Poisson structures obtained on the Witt algebra \cite{FKL} opens the question of finding algebras related to the Witt algebra
which admit nontrivial transposed Poisson structures. 
So, some algebras related to Witt algebra are studied in \cite{kk23,kk22,kkg23}.
The present paper is a continuation of this research. 
Specifically, we describe transposed Poisson structures on 
the deformative Schr\"{o}dinger-Witt algebra
${\mathcal W}(a,b,s),$  algebras $W_n(G)$ and algebras $HW_n(G).$
The algebras ${\mathcal W}(a,b,s)$ contain a subalgebra isomorphic to the well-known $W$-algebra  ${\mathcal W}(a, b).$ 
  The algebra   ${\mathcal W}(0,0,0)$ was introduced in \cite{h94} and later 
   it was generalized in \cite{ru06}.
   Their structures   have been widely studied in 
   \cite{jw15,ru06,wl12,jt20}.
Namely, 
derivations and automorphism group of  ${\mathcal W}(a,b,s)$ are described in  \cite{jw15,wl12};
  all biderivations   and commutative post-Lie algebra structures on  ${\mathcal W}(a,b,s)$ are obtained in \cite{jt20};
  graded post-Lie algebra structures and homogeneous Rota-Baxter operators on  
  ${\mathcal W}(0,0,0)$ are given in \cite{xt21} and so on.
The algebra $W_{-1}(G)$ was independently introduced by  Passman \cite{p98} and  Xu \cite{xu97}. It naturally appeared in the theory of integrable systems in  \cite{25}.
All one-dimensional central extensions of $W_{-1}(G)$ are obtained in \cite{su}.
Derivations of  $W_{-1}(G)$  with coefficients in tensor modules were found in \cite{gpw}.
The algebra  $W_1(G)$ firstly was defined in \cite{chs15}.
Derivations, automorphisms,  one-dimensional extensions, and the second cohomology group of  $W_1(G)$ were defined also in  \cite{chs15}.  Lie bialgebras structure on  $W_1(G)$ was described in \cite{lb15}. The algebra
$W_0(G)$ is an algebra of Witt type defined in \cite{Yu97}. 
We introduce the most general concept generalizing of yearly considered algebras $W_1(G),$ $W_0(G)$ and $W_{-1}(G).$ Namely, 
for each $n\in \mathbb Z$ we define an algebra $W_n(G)$ and studying its properties.
 The algebra $HW_{-1}(G)$ was firstly introduced in \cite{fzy15}, and
$HW_{-1}(G)$ is constructed as the direct sum of $W_{-1}(G)$ and a module over $W_{-1}(G).$ Similarly, we define and study of algebras $HW_n(G)$   for an arbitrary $n \in \mathbb Z.$

 \medskip

Summarizing, we prove that algebras ${\mathcal W}(a,b,s)$ admit nontrivial transposed Poisson structures only for $b=-1$;
algebras $W_n(G)$ admit nontrivial transposed Poisson structures   and algebras $HW_n(G)$ do not admit nontrivial transposed Poisson structures.

\section{Preliminaries}\ \ \ \ \ \ \ \ \

In this section, we recall some definitions and known results for studying transposed Poisson structures. Although all algebras and vector spaces are considered over the complex field, many results can be proven over other fields without modifications of proofs. The notation $\gen{S}$ means the $\mathbb C$-subspace generated by $S$.

\begin{definition} 
Let $\mathfrak {L}$ be a vector space equipped with two nonzero bilinear operations $\cdot$ and $[\cdot, \cdot]$. The triple $(\mathfrak {L}, \cdot, [\cdot, \cdot])$ is called a transposed Poisson algebra if $(\mathfrak {L}, \cdot)$ is a commutative associative algebra and $(\mathfrak {L}, [\cdot, \cdot])$ is a Lie algebra that satisfies the following compatibility condition
$$
\begin{array}{c}
2z \cdot [x,y]=[z \cdot x, y]+[x, z \cdot y].
\end{array}
$$
\end{definition}

\begin{definition} 
Let $(\mathfrak{L}, [\cdot, \cdot])$ be a Lie algebra. A transposed Poisson structure on $(\mathfrak {L}, [\cdot, \cdot])$ is a commutative associative multiplication $\cdot$ in $\mathfrak {L}$ which makes $(\mathfrak {L}, \cdot, [\cdot, \cdot])$ a transposed Poisson algebra.
\end{definition}

\begin{definition} 
Let $(\mathfrak {L}, [\cdot, \cdot])$ be a Lie algebra, $\varphi: \mathfrak {L}\rightarrow \mathfrak {L} $ be a linear map. Then $\varphi$ is a $\frac 12$-derivation if it satisfies
$$
\begin{array}{c}
\varphi ([x,y])= \frac 12 \big([\varphi (x), y]+[x, \varphi (y)]\big).
\end{array}
$$
\end{definition}

Observe that $\frac 12$-derivations are a particular case of $\delta$-derivations introduced by Filippov in 1998 (for references about the  study of $\frac{1}{2}$-derivations, see \cite{k23}) and 
recently the notion of $\frac{1}{2}$-derivations of algebras was generalized to 
 $\frac{1}{2}$-derivations from an algebra to a module \cite{zz}.  
The main example of $\frac 12$-derivations is the multiplication by an element from the ground field. Let us call such $\frac 12$-derivations as  {trivial
$\frac 12$-derivations.} It is easy to see that $[\mathfrak {L},\mathfrak {L}]$ and $\operatorname{Ann}(\mathfrak {L})$ are invariant under any $\frac 12$-derivation of $\mathfrak {L}$.

Let $G$ be an abelian group, $\mathfrak{L}=\bigoplus \limits_{g\in G}\mathfrak{L}_{g}$ be a $G$-graded Lie algebra.
We say that a $\frac 12$-derivation $\varphi$ has degree $g$ (and denopted by $\deg(\varphi)$) if $\varphi(\mathfrak{L}_{h})\subseteq \mathfrak{L}_{g+h}$.
Let $\triangle(\mathfrak{L})$ denote the space of $\frac 12$-derivations and write $\triangle_{g}(\mathfrak{L})=\{\varphi \in \triangle(\mathfrak{L}) \mid \deg(\varphi)=g\}$.
The following trivial lemmas are useful in our work.

\begin{lemma}\label{l01}
Let $\mathfrak{L}=\bigoplus \limits_{g\in G}\mathfrak{L}_{g}$ be a $G$-graded Lie algebra. Then
$
\triangle(\mathfrak{L})=\bigoplus \limits_{g\in G}\triangle_{g}(\mathfrak{L}).
$
\end{lemma}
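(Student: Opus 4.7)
The plan is to decompose an arbitrary $\varphi \in \triangle(\mathfrak{L})$ into its homogeneous components via the grading on $\mathfrak{L}$ and verify that each component individually satisfies the $\frac{1}{2}$-derivation identity. Since $\mathfrak{L} = \bigoplus_{g \in G} \mathfrak{L}_g$, write $\pi_g : \mathfrak{L} \to \mathfrak{L}_g$ for the canonical projection. For any $\varphi \in \triangle(\mathfrak{L})$ and any $g \in G$, I would define $\varphi_g : \mathfrak{L} \to \mathfrak{L}$ by declaring, for each homogeneous $x \in \mathfrak{L}_h$, that $\varphi_g(x) := \pi_{g+h}(\varphi(x))$, and then extending linearly. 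By construction, $\varphi_g(\mathfrak{L}_h) \subseteq \mathfrak{L}_{g+h}$, so once $\varphi_g$ is shown to be a $\frac{1}{2}$-derivation it automatically lies in $\triangle_g(\mathfrak{L})$. The pointwise equality $\varphi(x) = \sum_g \varphi_g(x)$, valid for all homogeneous $x$ (only finitely many summands are nonzero since $\varphi(x) \in \mathfrak{L}$ has finite support), extends linearly to give $\varphi = \sum_g \varphi_g$.

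The core step is to verify that each $\varphi_g$ respects the bracket. Take homogeneous $x \in \mathfrak{L}_h$ and $y \in \mathfrak{L}_k$, so that $[x,y] \in \mathfrak{L}_{h+k}$. Applying $\pi_{g+h+k}$ to the identity
\begin{equation*}
\varphi([x,y]) = \tfrac{1}{2}\bigl([\varphi(x), y] + [x, \varphi(y)]\bigr)
\end{equation*}
yields $\varphi_g([x,y])$ on the left. On the right, decompose $\varphi(x) = \sum_{g'} \varphi_{g'}(x)$ with $\varphi_{g'}(x) \in \mathfrak{L}_{g'+h}$, so $[\varphi_{g'}(x), y] \in \mathfrak{L}_{g'+h+k}$; the projection $\pi_{g+h+k}$ picks out only the term with $g' = g$, giving $[\varphi_g(x), y]$. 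The symmetric computation applies to $[x, \varphi(y)]$, yielding $[x, \varphi_g(y)]$. Bilinearity extends this to all pairs of elements, confirming $\varphi_g \in \triangle_g(\mathfrak{L})$.

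Finally, the sum $\triangle(\mathfrak{L}) = \sum_g \triangle_g(\mathfrak{L})$ is direct: if $\sum_g \psi_g = 0$ with $\psi_g \in \triangle_g(\mathfrak{L})$, then evaluating on any homogeneous $x \in \mathfrak{L}_h$ gives $\sum_g \psi_g(x) = 0$, and since $\psi_g(x) \in \mathfrak{L}_{g+h}$ lies in distinct graded components, the grading forces $\psi_g(x) = 0$ for every $g$. The only genuine subtlety, and the one worth flagging, is that in the infinite-dimensional graded setting one should read $\bigoplus_g \triangle_g(\mathfrak{L})$ as the space of formal sums that are finite on each homogeneous component, rather than as a true algebraic direct sum; this is the standard convention used in the sequel and causes no problem because $\varphi(x)$ has finite support in the grading for every $x \in \mathfrak{L}$.
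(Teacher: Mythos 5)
Your argument is correct and is the standard one; the paper itself states this lemma without proof, labelling it ``trivial,'' so there is no authorial proof to compare against. The decomposition via the projections $\pi_{g+h}$, the verification that applying $\pi_{g+h+k}$ to the $\frac12$-derivation identity isolates exactly the $g$-component on both sides, and the directness argument are all exactly what one would write out. Your closing caveat is also well taken and worth having on record: for a general $G$-graded Lie algebra an individual $\varphi$ may have infinitely many nonzero homogeneous components $\varphi_g$ (for instance, on an abelian graded algebra every linear map is a $\frac12$-derivation), so the equality with the algebraic direct sum requires either the ``finite on each homogeneous element'' reading you propose or an additional finiteness hypothesis (e.g.\ finite generation), which the concrete algebras treated in the paper do satisfy; the authors use the lemma only componentwise, so nothing downstream is affected.
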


\begin{lemma}\label{l1} (see {\rm\cite{FKL}})
Let $(\mathfrak {L}, \cdot, [\cdot, \cdot])$ be a transposed Poisson algebra and $z$ an arbitrary element from $\mathfrak {L}$.
Then the left multiplication $L_{z}$ in the   commutative associative algebra $(\mathfrak {L}, \cdot)$ gives a $\frac 12$-derivation of the Lie algebra $(\mathfrak {L}, [\cdot, \cdot])$.
\end{lemma}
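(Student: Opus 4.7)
The plan is to verify directly from the definitions that the map $L_z \colon x \mapsto z \cdot x$ satisfies the $\frac{1}{2}$-derivation identity with respect to the Lie bracket $[\cdot,\cdot]$. Since $(\mathfrak{L}, \cdot, [\cdot,\cdot])$ is a transposed Poisson algebra by assumption, the defining compatibility condition
\[
2z \cdot [x,y] = [z \cdot x, y] + [x, z \cdot y]
\]
holds for all $x, y, z \in \mathfrak{L}$.

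Given any fixed $z \in \mathfrak{L}$, I would simply rewrite this identity as
\[
z \cdot [x,y] = \tfrac{1}{2}\bigl([z \cdot x, y] + [x, z \cdot y]\bigr)
\]
and observe that, in terms of the operator $L_z$, this is precisely
\[
L_z([x,y]) = \tfrac{1}{2}\bigl([L_z(x), y] + [x, L_z(y)]\bigr),
\]
which is the definition of a $\frac{1}{2}$-derivation of the Lie algebra $(\mathfrak{L}, [\cdot,\cdot])$.

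There is essentially no obstacle here: the statement is a direct reformulation of the transposed Poisson axiom in operator notation, and linearity of $L_z$ follows from bilinearity of the associative product $\cdot$. The only thing worth commenting on is that the factor $\frac{1}{2}$ in the defining identity of a transposed Poisson algebra is matched exactly with the factor $\frac{1}{2}$ in the definition of a $\frac{1}{2}$-derivation, which is precisely the reason why left multiplications in the commutative associative part of a transposed Poisson algebra give rise to $\frac{1}{2}$-derivations of its Lie part (rather than to ordinary derivations, as happens in the usual Poisson case).
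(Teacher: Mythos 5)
Your proof is correct and is exactly the standard argument: the paper states this lemma without proof (citing \cite{FKL}), and the proof given there is precisely this one-line observation that dividing the transposed Poisson compatibility identity by $2$ yields the $\frac{1}{2}$-derivation identity for $L_z$. Nothing further is needed.
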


\begin{lemma}  (see {\rm\cite{FKL}})
Let $\mathfrak {L}$ be a Lie algebra without non-trivial $\frac 12$-derivations. Then every transposed Poisson structure defined on $\mathfrak {L}$ is trivial.
\end{lemma}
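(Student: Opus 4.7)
The plan is to reduce the classification of transposed Poisson structures on $\mathfrak{L}$ to the structure of its $\frac{1}{2}$-derivations, using the preceding lemma. Suppose $\cdot$ is a transposed Poisson structure on $(\mathfrak{L}, [\cdot,\cdot])$. By Lemma \ref{l1}, for every $z \in \mathfrak{L}$ the left multiplication operator $L_z : \mathfrak{L} \to \mathfrak{L}$, $x \mapsto z \cdot x$, is a $\frac{1}{2}$-derivation of the Lie algebra $(\mathfrak{L}, [\cdot,\cdot])$.

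By hypothesis, every $\frac{1}{2}$-derivation is trivial, i.e.\ a scalar multiple of the identity. Hence for each $z \in \mathfrak{L}$ there exists a scalar $\alpha(z) \in \mathbb{C}$ such that $z \cdot x = \alpha(z)\, x$ for all $x \in \mathfrak{L}$. The next step is to exploit commutativity of $\cdot$: for any $x, z \in \mathfrak{L}$ we have
$$
\alpha(z)\, x \; = \; z \cdot x \; = \; x \cdot z \; = \; \alpha(x)\, z.
$$
Picking two linearly independent elements $x, z$ (which exists as soon as $\dim \mathfrak{L} \geq 2$; the one-dimensional case being handled separately and yielding only trivial structures anyway) forces $\alpha(x) = \alpha(z) = 0$, and therefore $\cdot$ is identically zero on $\mathfrak{L}$, which is precisely the trivial transposed Poisson structure.

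There is essentially no obstacle in this argument; the content is entirely packaged into Lemma \ref{l1}, and the remaining reasoning is a two-line computation using the commutativity axiom. The only slightly delicate point is keeping track of the convention for what a "trivial" structure means (the zero product, or the degenerate one-dimensional case), but this is a matter of bookkeeping rather than mathematical difficulty.
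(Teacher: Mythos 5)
Your argument is correct and coincides with the proof in \cite{FKL}, which the paper cites rather than reproves: left multiplications are $\frac 12$-derivations, hence scalar multiples of the identity, and commutativity of $\cdot$ forces all these scalars to vanish. Note only that your caveat about the one-dimensional case never arises, since the paper's definition of a transposed Poisson algebra requires the bracket to be a nonzero operation, so $\dim \mathfrak{L}\geq 2$ automatically and two linearly independent elements always exist.
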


\section{Transposed Poisson   structures on the deformative Schr\"{o}dinger-Witt algebras}\ \ \ \ \ \ \ \ \

Let us give the definition of the algebra ${\mathcal W}(a,b,s),$  
where  $a,b\in \mathbb{C}$ and $s\in \{0,\frac{1}{2}\}.$ 
${\mathcal W}(a,b,s)$ is spanned by  $\{L_m, I_m, Y_{m+s} \ | \ m \in \mathbb{Z}\}$ and 
the  multiplication table is given by the following nontrivial relations:
\begin{longtable}{lllll}
$[L_m,L_n]$&$=$&$(n-m)L_{m+n},$\\
$[L_m,I_n]$&$=$&$(n+bm+a)I_{m+n},$ \\
$[L_m,Y_{n+s}]$&$=$&$\left(n+s+\frac{(b-1)m+a}{2}\right)Y_{m+n+s},$\\
$[Y_{m+s},Y_{n+s}]$&$=$&$(n-m)I_{m+n+2s}.$\\
\end{longtable}

 Thanks to \cite{jt20}, we  have 
 ${\mathcal W}(a+1,b,0) \cong {\mathcal W}(a,b,\frac{1}{2}).$ Hence,  we only have to  consider the case of $s=\frac{1}{2}$.
Set $M_n= \langle L_n,I_n\rangle$ and $M_{n+\frac{1}{2}}=\langle  Y_{n+\frac{1}{2}}\rangle.$
Then ${\mathcal W}(a,b,\frac{1}{2})$ is $\frac{1}{2}\mathbb{Z}$-graded, i.e.,

\begin{center}
    ${\mathcal W}(a,b,\frac{1}{2})=\big(\bigoplus\limits_{n\in \mathbb{Z}}M_n\big)\bigoplus\big(\bigoplus\limits_{n\in \mathbb{Z}}M_{n+\frac{1}{2}}\big
    ).$
\end{center}
Hence $\Delta({\mathcal W}(a,b,\frac{1}{2}))$ has a natural $\frac{1}{2}\mathbb{Z}$-grading, i.e.,
\begin{center}$\Delta({\mathcal W}(a,b,\frac{1}{2}))=
\big(\bigoplus\limits_{n\in \mathbb{Z}}\Delta_n({\mathcal W}(a,b,\frac{1}{2}))\big)\bigoplus
\big(\bigoplus\limits_{n\in \mathbb{Z}}\Delta_{n+\frac{1}{2}}({\mathcal W}(a,b,\frac{1}{2}))\big).$
\end{center}

Note that ${\mathcal W}(a,b,\frac{1}{2})$ contains a subalgebra $\langle  L_m,I_m \ |\ m\in\mathbb{Z}\rangle,$ which isomorphic to the well-known algebra ${\mathcal W}(a,b)$   with the multiplication table:
$$[L_m,L_n]=(n-m)L_{m+n}, \quad [L_m,I_n]=(n+bm+a)I_{m+n}.$$

The description of $\frac{1}{2}$-derivations of Lie algebra ${\mathcal W}(a,b)$ is given in the following theorem. 

\begin{theorem}[see, \cite{FKL}] \label{dif}
If $b\neq-1,$ then there are no non-trivial $\frac{1}{2}$-derivations of ${\mathcal W}(a,b).$
Let $\varphi$ be a $\frac{1}{2}$-derivation of the algebra ${\mathcal W}(a,-1),$ then there are two finite sets of elements from the basic field $\{\alpha_t\}_{t\in\mathbb{Z}}$ and $\{\beta_t\}_{t\in\mathbb{Z}},$
such that
$$\varphi(L_m)=\sum_{t\in\mathbb{Z}}\alpha_tL_{m+t}+\sum_{t\in\mathbb{Z}}\beta_tI_{m+t},  \quad
\varphi(I_m)=\sum_{t\in\mathbb{Z}}\alpha_tI_{m+t}.$$
\end{theorem}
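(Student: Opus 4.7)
The plan is to exploit the $\mathbb{Z}$-grading $\mathcal{W}(a,b) = \bigoplus_{n\in\mathbb{Z}}\langle L_n, I_n\rangle$ and, by Lemma \ref{l01}, reduce the problem to a homogeneous $\frac{1}{2}$-derivation $\varphi_t$ of degree $t \in \mathbb{Z}$. Writing
\begin{align*}
\varphi_t(L_m) &= \alpha_t(m) L_{m+t} + \beta_t(m) I_{m+t},\\
\varphi_t(I_m) &= \gamma_t(m) L_{m+t} + \delta_t(m) I_{m+t},
\end{align*}
I would substitute into the $\frac{1}{2}$-derivation identity applied to the relations $[L_m, L_n] = (n-m)L_{m+n}$ and $[L_m, I_n] = (n+bm+a)I_{m+n}$, and separate the $L$- and $I$-components to obtain four functional equations in the scalar coefficients $\alpha_t, \beta_t, \gamma_t, \delta_t$.

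Taking $m = 0$ in each equation is the quickest route to the key relations. The $L$-component from $[L_m, L_n]$ is the standard Witt equation and, after also taking $m = t$ to handle one exceptional value, forces $\alpha_t(m) \equiv \alpha_t$ constant. The $L$-component from $[L_m, I_n]$ yields $\gamma_t(n)(n + 2a - t) = 0$, so $\gamma_t$ is supported at most at a single index $n_0 = t - 2a$; re-evaluating the full equation at $n = n_0$ with $m \neq 0$ then forces $\gamma_t \equiv 0$. The two $I$-components give the pivotal identities
$$\delta_t(n)(n+a-t) = \alpha_t(n+bt+a), \qquad \beta_t(m)(t+a-m) = \beta_t(0)(t+a+bm).$$

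The case split on $b$ emerges from these pivotal identities. Setting $n = t-a$ in the first gives $\alpha_t \cdot t(1+b) = 0$, and setting $m = t+a$ in the second gives $\beta_t(0)(1+b)(t+a) = 0$. For $b \neq -1$, these force $\alpha_t = 0$ and $\beta_t(0) = 0$ whenever the relevant index actually lies in $\mathbb{Z}$; the residual cases (notably $a \notin \mathbb{Z}$, where neither $t \pm a$ is integer) are cleared by substituting the rational ansatz $\beta_t(m) = \beta_t(0)(t+a+bm)/(t+a-m)$ back into the full two-variable equation and comparing polynomial coefficients in $m,n$. This propagation yields $\beta_t \equiv \delta_t \equiv 0$ for $t \neq 0$, together with $\delta_0 = \alpha_0$, so only the trivial derivation survives. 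For $b = -1$, both obstruction factors $(1+b)$ vanish identically and the equations degenerate: the $\beta_t$-equation collapses to $\beta_t(m) = \beta_t(0) =: \beta_t$ constant, the pivotal identity to $\delta_t(n) = \alpha_t$ constant, and a direct verification confirms that any pair $(\alpha_t, \beta_t)$ defines a $\frac{1}{2}$-derivation, giving exactly the claimed formula.

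The main obstacle is the boundary analysis: at the indices where the linear factors $n+a-t$, $t+a-m$, or $n+2a-t$ vanish, the shortcut substitutions degenerate to $0=0$, and one must return to the full functional equation and vary a second parameter to extract the required vanishing. A parallel subtlety is splitting on whether $a$ is integer, half-integer, or generic, since this controls which exceptional indices actually lie in $\mathbb{Z}$ and must be cleared separately before concluding $\beta_t \equiv 0$ uniformly for $b \neq -1$.
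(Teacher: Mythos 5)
The paper does not actually prove Theorem \ref{dif}; it is imported verbatim from \cite{FKL}, so there is no internal proof to compare against. Your reconstruction follows exactly the strategy the paper uses for its own results (grade by Lemma \ref{l01}, write a homogeneous $\frac12$-derivation with undetermined coefficients, apply it to each defining bracket, and solve the resulting functional equations), and the key identities you extract are correct: the Witt-component equation does force $\alpha_t(m)\equiv\alpha_t$, the substitution $m=0$ in the $[L_m,I_n]$ relations does give $(n+2a-t)\gamma_t(n)=0$, $(n+a-t)\delta_t(n)=\alpha_t(n+bt+a)$ and $(t+a-m)\beta_t(m)=(t+a+bm)\beta_t(0)$, and the factor $(1+b)$ is precisely the obstruction that separates $b=-1$ from $b\neq-1$. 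The one place where your sketch is thinner than it should be is the residual case $a\notin\mathbb Z$ (and the degenerate indices where the linear factors vanish): you describe the polynomial-coefficient argument only for the $\beta_t$-equation, but the same issue arises for the pair $(\alpha_t,\delta_t)$, since $n=t-a$ is then not an admissible integer. The fix is the one you name -- substitute $\delta_t(n)=\alpha_t(n+bt+a)/(n+a-t)$ into the full two-variable $I$-component of the $[L_m,I_n]$ relation and compare coefficients of monomials in $m,n$; this yields constraints of the form $t\,b(b+1)=0$ and $t(b-2)(b+1)=0$, which together kill $\alpha_t$ for all $t\neq0$, $b\neq-1$. With that step made explicit, your argument is a complete and correct proof of the quoted theorem.
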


Now we will compute $\frac{1}{2}$-derivations of the algebras ${\mathcal W}(a,b,\frac{1}{2}).$
Summarizing the results from lemmas \ref{12even} and \ref{lemm10}, we conclude that
if $b\neq-1,$ then ${\mathcal W}(a,b,\frac{1}{2})$ does not have nontrivial $\frac{1}{2}$-derivations. 
On the other side,
Theorem \ref{main2} gives the full description of nontrivial  $\frac{1}{2}$-derivations of ${\mathcal W}(a,-1,\frac{1}{2}).$

\begin{lemma}\label{12even}  
If $b\neq-1,$ then
$\Delta_0({\mathcal W}(a,b,\frac{1}{2}))=\langle {\rm Id} \rangle$ and
 $\Delta_{j \in \mathbb{Z}\setminus\{0\}}({\mathcal W}(a,b,\frac{1}{2}))=0.$

\end{lemma}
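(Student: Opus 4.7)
The plan is to exploit the inclusion of $\mathcal{A} := \langle L_m, I_m : m \in \mathbb{Z}\rangle \cong {\mathcal W}(a,b)$ as a subalgebra of ${\mathcal W}(a,b,\frac{1}{2})$. Any $\varphi \in \Delta_j({\mathcal W}(a,b,\frac{1}{2}))$ with $j \in \mathbb{Z}$ preserves integer degrees, so $\varphi(\mathcal{A}) \subseteq \mathcal{A}$ and $\varphi|_{\mathcal{A}}$ is a degree-$j$ $\frac{1}{2}$-derivation of ${\mathcal W}(a,b)$. Since $b \neq -1$, Theorem \ref{dif} forces $\varphi|_{\mathcal{A}}$ to be a scalar multiple of the identity: $\varphi(L_m) = c\,\delta_{j,0}\, L_m$ and $\varphi(I_m) = c\,\delta_{j,0}\, I_m$ for some $c \in \mathbb{C}$. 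The grading also forces $\varphi(Y_{m+\frac{1}{2}}) = \mu_m Y_{m+j+\frac{1}{2}}$ for scalars $\mu_m$, and it remains to pin them down.

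Next, apply the $\frac{1}{2}$-derivation identity to $[L_0, Y_{n+\frac{1}{2}}] = (n+\frac{1}{2}+\frac{a}{2}) Y_{n+\frac{1}{2}}$. A short rearrangement reduces this to
$$
\mu_n \left( n + \tfrac{1}{2} + \tfrac{a}{2} - j \right) \;=\; c\,\delta_{j,0}\left( n + \tfrac{1}{2} + \tfrac{a}{2} \right).
$$
For $j = 0$ this yields $\mu_n = c$ for every $n$ with $n + \frac{1}{2} + \frac{a}{2} \neq 0$; for $j \neq 0$ it yields $\mu_n = 0$ except possibly at $n_0 = j - \frac{1+a}{2}$. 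When $a$ is even, no integer exceptional index exists and the lemma follows immediately. When $a$ is odd, a single index $n_0$ remains undetermined in each case.

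To resolve this, apply the identity to $[Y_{n_0+\frac{1}{2}}, Y_{n+\frac{1}{2}}] = (n - n_0) I_{n_0+n+1}$ for a suitably chosen $n \neq n_0$ at which $\mu_n$ has already been determined. Expanding both sides leads, in the $j = 0$ case, to $\mu_{n_0} + \mu_n = 2c$, forcing $\mu_{n_0} = c$; in the $j \neq 0$ case to the relation $\mu_{n_0}(n - n_0 - j) + \mu_n(n - n_0 + j) = 0$, which with the known $\mu_n = 0$ forces $\mu_{n_0} = 0$ upon additionally choosing $n \neq n_0 + j$. Combining, $\varphi = c\,{\rm Id}$ when $j = 0$ and $\varphi = 0$ when $j \neq 0$.

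I expect the main obstacle to be the bookkeeping of these exceptional indices, which depend on the parity of $a$ and the value of $j$: one must verify that the auxiliary index $n$ in the $[Y,Y]$ step is chosen so that $\mu_n$ is already pinned down and that the remaining linear coefficient of $\mu_{n_0}$ is nonzero (that is, $n \neq n_0$, $n + \frac{1}{2} + \frac{a}{2} \neq 0$, and $n \neq n_0 + j$ in the second case). A minor additional subtlety is to match the scalar $c$ arising from $\varphi|_{\mathcal A}$ with the scalar governing $\varphi|_{\mathcal Y}$; this is precisely what the $[Y,Y]$ identity delivers for free. Aside from these points, the remaining computations are routine manipulations of the structure constants.
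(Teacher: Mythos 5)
Your proposal is correct and takes essentially the same route as the paper: both use the integer grading to restrict to the subalgebra $\langle L_m,I_m\rangle\cong{\mathcal W}(a,b)$, invoke Theorem \ref{dif} to trivialize that part, and then pin down the coefficients on the $Y_{n+\frac{1}{2}}$'s by applying the $\frac{1}{2}$-derivation identity to structure relations. The only cosmetic difference is that you first exploit $[L_0,Y_{n+\frac{1}{2}}]$ and fall back on $[Y_{m+\frac{1}{2}},Y_{n+\frac{1}{2}}]$ for the single exceptional index, whereas the paper works exclusively with the $[Y,Y]$ relation; both computations close without difficulty.
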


\begin{proof}
Suppose that $b\neq-1$ and  $\varphi_j\in \Delta_j({\mathcal W}(a,b,\frac{1}{2}))$    be a homogeneous $\frac{1}{2}$-derivation. In this case, we have
\begin{equation}\label{der}
\varphi_j(M_n)\subseteq M_{n+j}, \quad \varphi_j (M_{n+\frac{1}{2}})\subseteq M_{n+j+\frac{1}{2}}.
\end{equation}

\noindent By Theorem \ref{dif}, we get
\begin{equation}\label{der1}
\varphi_j(L_n)=\delta_{j,0}\lambda L_{n}, \quad \varphi_j(I_n)=\delta_{j,0}\lambda I_{n}. 
\end{equation}
By (\ref{der}), we can assume that
\begin{equation}\label{der4} \varphi_j(Y_{n+\frac{1}{2}})=a_{j,n} Y_{n+j+\frac{1}{2}}.
\end{equation}

\begin{enumerate}
    \item[(1)] If $j\neq0,$ then applying $\varphi_j$ to both side of $[Y_{m+\frac{1}{2}},Y_{n+\frac{1}{2}}]=(n-m)I_{m+n+1},$ and using (\ref{der1}) and (\ref{der4}), we obtain
\begin{equation}\label{der2}
(n-m-j)a_{j,m}+(n+j-m)a_{j,n}=0.
\end{equation}
\noindent
Setting $n=0$ in (\ref{der2}), we get $(-m-j)a_{j,m}+(j-m)a_{j,0}=0.$ Then taking $m=-j$ in this equation, we derive $2ja_{j,0}=0,$ which implies $a_{j,0}=0.$ Consequently, we  have  $a_{j,m}=0$ for $m\neq-j.$ Letting $m=j$ and $n=-j$ in (\ref{der2}), we obtain $a_{j,-j}=0$, this shows   $a_{j,m}=0$  for all $m\in\mathbb{Z}.$
This proves $\varphi_{j}=0$ for $j\in{\mathbb{Z}\setminus\{0\}}. $

  \item[(2)] If $j=0,$ then applying $\varphi_0$ to both side of 
$[Y_{m+s},Y_{n+s}]= (n-m)I_{m+n+2s}$
and using (\ref{der1}) and (\ref{der4}), we get
\begin{equation*}
(n-m)(a_{0,m}+a_{0,n})=2\lambda(n-m).
\end{equation*}
Letting $m=0$ in this equation, we obtain $n(a_{0,0}+a_{0,n})=2\lambda n,$ and $a_{0,n}=2\lambda-a_{0,0}$ for $n\neq 0.$
It is following that   $a_{0,n}=\lambda$ for all $n\in\mathbb{Z}.$ Hence $\varphi_0=\lambda \ {\rm Id}.$

\end{enumerate}
\end{proof}

\begin{lemma}\label{lemm10}
If $b\neq-1,$ then $\Delta_{j+\frac{1}{2}}({\mathcal W}(a,b,\frac{1}{2}))=0.$  
\end{lemma}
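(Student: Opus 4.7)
The plan is to parallel the structure of the integer-degree case \cref{12even}, but now with more coupling between the components. First I would use the $\frac{1}{2}\mathbb{Z}$-grading to parametrize: since $\varphi_{j+\frac{1}{2}}$ sends $M_n$ to $M_{n+j+\frac{1}{2}}=\langle Y_{n+j+\frac{1}{2}}\rangle$ and sends $M_{n+\frac{1}{2}}$ to $M_{n+j+1}=\langle L_{n+j+1},I_{n+j+1}\rangle$, one can write
\[
\varphi(L_m)=a_m Y_{m+j+\frac{1}{2}},\quad \varphi(I_m)=b_m Y_{m+j+\frac{1}{2}},\quad \varphi(Y_{n+\frac{1}{2}})=c_n L_{n+j+1}+d_n I_{n+j+1}
\]
for scalars $a_m,b_m,c_n,d_n\in\mathbb{C}$. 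Note here I cannot simply quote \cref{dif} for the subalgebra $\mathcal{W}(a,b)$, because the restriction of $\varphi$ to that subalgebra leaves it.

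Next I would write down the $\frac{1}{2}$-derivation identity for the four nontrivial brackets $[L_m,L_n]$, $[L_m,I_n]$, $[L_m,Y_{n+\frac{1}{2}}]$, $[Y_{m+\frac{1}{2}},Y_{n+\frac{1}{2}}]$. The vanishing of $[Y,I]$ gives a convenient decoupling: the $[L_m,I_n]$-identity involves only the $b$'s, the $[L_m,L_n]$-identity only the $a$'s, the $[L_m,Y_{n+\frac{1}{2}}]$-identity splits into an $L$-component purely in the $c$'s and an $I$-component mixing $a$'s and $d$'s, while $[Y_{m+\frac{1}{2}},Y_{n+\frac{1}{2}}]$ couples $b$'s with $c$'s.

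My strategy would be to solve these in the order $c\Rightarrow b\Rightarrow a\Rightarrow d$. Setting $m=0$ in the $L$-component of the $[L_m,Y_{n+\frac{1}{2}}]$-equation yields $c_n(n+a-j)=0$, so $c_n=0$ except possibly at the single index $n=j-a$; then substituting this exceptional index with arbitrary $m$ produces an equation linear in $m$ with coefficient $c_{j-a}$, forcing $c_{j-a}=0$ as well. With all $c_n$ zero, the $[Y_{m+\frac{1}{2}},Y_{n+\frac{1}{2}}]$-identity collapses to $(n-m)b_{m+n+1}=0$, and varying $m,n$ with $m\neq n$ wipes out every $b_\ell$. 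A parallel argument on the $[L_m,L_n]$-equation, combined with the $I$-component of the $[L_m,Y_{n+\frac{1}{2}}]$-equation at $m=0$ (which gives $(n-j)(d_n-a_0)=0$, hence $d_n=a_0$ generically), yields a linear relation whose constant term is $a_0(b+1)$, and here is precisely where the hypothesis $b\neq-1$ enters, forcing $a_0=0$ and subsequently $a_m=d_m=0$ for all $m$.

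The main obstacle will be the handling of the exceptional indices (such as $n=j-a$, $n=-j$, or $a_m$ at the pole $m-j-\tfrac12-\tfrac{a}{2}=0$) where the relevant linear factor in the scalar equations vanishes. In each such case a second, independent substitution must be invoked to pin down the potentially nonzero coefficient; the delicate point is to check that for every choice of $a,b,j$ with $b\neq-1$ at least one of the remaining equations provides a nontrivial constraint on that exceptional coefficient.
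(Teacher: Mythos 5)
Your proposal is correct in substance but follows a genuinely different route from the paper. The paper does not write out the four bracket identities at all: it observes that $\varphi$ swaps the two components of the $\mathbb{Z}_2$-grading, so the commutator $[\varphi,\mathrm{ad}_{Y_{n+\frac12}}]$ is a $\frac12$-derivation preserving that grading, hence by \cref{12even} equals $\alpha_n\,\mathrm{Id}$; evaluating this operator on $I_m$ kills the coefficients of $\varphi(I_m)$ in one stroke, and evaluating it on $L_m$ produces just two scalar relations from which $\gamma_{j,n}=\mu_{j,n}=\alpha_{j,n}=0$ follow. Your direct approach instead solves the full linear system coming from the identities on $[L_m,L_n]$, $[L_m,I_n]$, $[L_m,Y_{n+\frac12}]$, $[Y_{m+\frac12},Y_{n+\frac12}]$ (these are exactly the general-$b$ versions of \cref{2.1}--\cref{2.4} used in \cref{main2} for $b=-1$), and I have checked your key computations: $c_n(n+a-j)=0$ at $m=0$; the collapse of the $[Y,Y]$-identity to $(n-m)b_{m+n+1}=0$ once $c\equiv0$; $(n-j)(d_n-a_0)=0$; then $a_m=a_0$ from the $I$-component with generic $n$; and finally the $[L_m,L_n]$-identity reducing to $(b+1)(n-m)a_0=0$, which is indeed the unique point where $b\neq-1$ is used. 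What your approach costs is the exceptional-index bookkeeping you flag at the end, and this is real work that your writeup defers rather than completes: for instance, the coefficient $d_j$ (where the factor $n-j$ vanishes) must be recovered from the instances of the $I$-component with $n=j$, $m\neq0$, which give $d_j(2j+1+a+bm)=0$, and one must check this forces $d_j=0$ for every admissible $(a,b)$, including the boundary configurations such as $b=3$ with $2j+1+a=0$ where the complementary substitution $m+n=j$ degenerates. These cases do all close (so the argument is sound), but until they are written out the proof is incomplete at exactly the points you identify; the paper's commutator trick is precisely what lets it bypass most of this casework by importing the rigidity already established in \cref{12even}.
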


\begin{proof}
Let $b\neq-1$ and $\varphi_{j+\frac{1}{2}}\in\Delta_{j+\frac{1}{2}}({\mathcal W}(a,b,\frac{1}{2}))$  be a homogeneous $\frac{1}{2}$-derivation. Then we have
\begin{equation}\label{der3}
\varphi_{j+\frac{1}{2}}(M_n)\subseteq M_{n+j+\frac{1}{2}}, \quad \varphi_{j+\frac{1}{2}} (M_{n+\frac{1}{2}})\subseteq M_{n+j+1}.
\end{equation}
\noindent
By (\ref{der3}) we can assume that
\begin{center}$\varphi_{j+\frac{1}{2}}(L_n)=\alpha_{j,n}Y_{n+j+\frac{1}{2}}, \quad \varphi_{j+\frac{1}{2}}(I_n)=\beta_{j,n}Y_{n+j+\frac{1}{2}}, \quad \varphi_{j+\frac{1}{2}}(Y_{n+\frac{1}{2}})=\gamma_{j,n}L_{n+j+1}+\mu_{j,n}I_{n+j+1}.$\end{center}

Let us say $\varphi=\varphi_{j+\frac{1}{2}}.$
The algebra ${\mathcal W}(a,b,\frac{1}{2})$ admits a
$\mathbb{Z}_2$-grading.
Namely, 
\begin{center}${\mathcal W}(a,b,\frac{1}{2})_{\overline{0}}=\langle M_j \ | \ j \in \mathbb Z\rangle$
and 
${\mathcal W}(a,b,\frac{1}{2})_{\overline{1}}=\langle M_{j+\frac{1}{2}} \ | \ j \in \mathbb Z\rangle.$\end{center}

The mapping $\varphi$ changes the grading components. 
It is known, that the commutator of one derivation and one $\frac{1}{2}$-derivation gives a new $\frac{1}{2}$-derivation.
Hence, 
$[\varphi, {\rm ad}_{Y_{n+\frac{1}{2}}}]$ is a $\frac{1}{2}$-derivation which preserve the grading components.
Namely, it is a $\frac 12$-derivation, described in Lemma  \ref{12even}, 
i.e., $[\varphi, {\rm ad}_{Y_{n+\frac{1}{2}}}]= \alpha_n {\rm Id}.$

It is easy to see, that 
\begin{longtable}{lcl}
$\alpha_n I_m $&$=$&$
[\varphi, {\rm ad}_{Y_{n+\frac{1}{2}}}](I_m)=
\varphi[I_m, Y_{n+\frac{1}{2}}]-[\varphi(I_m), Y_{n+\frac{1}{2}}]=$\\
& $=$ &$-\beta_{j,m}[Y_{m+j+\frac{1}{2}}, Y_{n+\frac{1}{2}}]=
-(n-m-j)\beta_{j,m} I_{m+n+j+1}.$
\end{longtable}
For fixed elements $m$ and $j$, we 
can choose an element $n,$ such that $ n\neq m+j$ and $n\neq -1-j.$
The last observations give $\beta_{j,m}=0$ for each $(j,m) \in \mathbb Z \times \mathbb Z.$
Hence,  $\varphi(I_{n+\frac{1}{2}})=0$ and $\alpha_n=0$ for all $n \in \mathbb Z.$

Let us now consider the following observation
\begin{longtable}{lcl}
 $0$ & $=$ &
$[\varphi, {\rm ad}_{Y_{n+\frac{1}{2}}}](L_m)=
\varphi[L_m, Y_{n+\frac{1}{2}}]-[\varphi(L_m), Y_{n+\frac{1}{2}}]=$ \\

& $=$ & 
$(n+\frac{(b-1)m+a+1}{2}) \varphi(Y_{n+m+\frac{1}{2}})
-\alpha_{j,m}[Y_{m+j+\frac{1}{2}},Y_{n+\frac{1}{2}}]=$\\

& $=$ & 
$(n+\frac{(b-1)m+a+1}{2})(\gamma_{j,n+m}L_{n+m+j+\frac{1}{2}}+\mu_{j,n+m}I_{n+m+j+1})- \alpha_{j,m}(n-m-j)I_{m+j+n+1},$
\end{longtable}
which gives two relations considered below.
\begin{enumerate}

    \item[First,] 
$(2n+(b-1)m +a+1)\gamma_{j,m+n}=0.$
Taking $m=0$ and $n=0,$ we obtain
\begin{center}
$((b-1)n+a+1)\gamma_{j,n}=0$ and 
$(2n+a+1)\gamma_{j,n}=0.$
\end{center}
If $b\neq3$ or $a\neq -1-2n,$ we have $\gamma_{j,n}=0.$
In the opposite case, if  $b=3$ and $a= -1-2n,$ we consider 
\begin{longtable}{lcl}
$0$&$=$&$2(n-k)\varphi(I_{k+n+1})=
2\varphi[Y_{k+\frac 12}, Y_{n+\frac 12}]=$\\
&$=$ &$
\mu_{j,k}[I_{k+j+\frac 12}, Y_{n+\frac 12}]+\gamma_{j,n}[Y_{k+\frac 12}, L_{n+j+1}]+\mu_{j,n}[Y_{k+\frac 12}, I_{n+j+1}]=$\\
&$=$&$-\gamma_{j,n}\Big(k+\frac{(b-1)(n+j+1)+a+1}{2}\Big)L_{n+k+j+\frac{3}{2}}.$
\end{longtable}

Taking a suitable $k$, we obtain that $\gamma_{j,n}=0$ 
and $\varphi(Y_{n+\frac{1}{2}})=\mu_{j,n}I_{n+j+1}.$

\item[Second,] 
$\big(n+\frac{(b-1)m+a+1}{2}\big)\mu_{j,m+n}=(n-m-j)\alpha_{j,m},$
which gives 
$\frac{2n+a+1}{2}\mu_{j,n}=\alpha_{j,0} (n-j).$
Let us take an element $n,$ such that $n\neq m+j$ and $2(m+n)+a+1\neq0,$ hence

\begin{longtable}{lclcl}
$\alpha_{j,m}$&$=
$&$\frac{2n+(b-1)m+a+1}{n-m-j}  \mu_{j,n}$&$=$&$
\frac{(2n+(b-1)m+a+1)(n+m-j)}{(n-m-j)(2(n+m)+a+1)}\alpha_{j,0}.$
\end{longtable}
The last observation, due to almost arbitrary $n$, gives that 
$\alpha_{j,m}=0$ for arbitrary $m \in \mathbb Z,$
which gives $\mu_{j,m}=0$ for arbitrary  $m \in \mathbb Z.$
\end{enumerate}

Summarizing, $\varphi=0.$
 
\end{proof}

\begin{theorem}\label{main2}
Let $\varphi$ be a $\frac{1}{2}$-derivation of the algebra ${\mathcal W}(a,-1,\frac{1}{2}),$ then there are three  sets of elements from the basic field $\{\alpha_t\}_{t\in\mathbb{Z}},$ $\{\beta_t\}_{t\in\mathbb{Z}}$ and $\{\gamma_t\}_{t\in\mathbb{Z}}$
such that
\begin{longtable}{llll}
$\varphi(L_m)$ & = &$\sum\limits_{t\in{\mathbb{Z}}}\alpha_tL_{m+t}+\sum\limits_{t\in{\mathbb{Z}}}\beta_tI_{m+t}+\sum\limits_{t\in{\mathbb{Z}}}\gamma_tY_{m+t+\frac{1}{2}},$\\
$\varphi(I_m)$ & = &$\sum\limits_{t\in{\mathbb{Z}}}\alpha_tI_{m+t},$ \\
$\varphi(Y_{m+\frac{1}{2}})$ & = &$\sum\limits_{t\in{\mathbb{Z}}}\alpha_tY_{m+t+\frac{1}{2}}+\sum\limits_{t\in{\mathbb{Z}}}\gamma_tI_{m+t+1}.$
\end{longtable}
\end{theorem}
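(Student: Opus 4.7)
My plan is to decompose $\varphi$ into homogeneous components via the $\frac{1}{2}\mathbb{Z}$-grading of ${\mathcal W}(a,-1,\frac{1}{2})$ (analogue of Lemma~\ref{l01}) and classify the components of integer and half-integer degree separately. Summing and re-indexing will then yield the three families $\{\alpha_t\}, \{\beta_t\}, \{\gamma_t\}$ in the statement, with integer-degree pieces contributing $\alpha_t,\beta_t$ and half-integer-degree pieces contributing $\gamma_t$.

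For a homogeneous $\varphi_j \in \Delta_j$ of integer degree $j$, the restriction to the subalgebra $\langle L_m, I_m\rangle \cong {\mathcal W}(a,-1)$ is still a $\frac{1}{2}$-derivation of degree $j$, so Theorem~\ref{dif} gives $\varphi_j(L_m) = \alpha_j L_{m+j} + \beta_j I_{m+j}$ and $\varphi_j(I_m) = \alpha_j I_{m+j}$. Writing $\varphi_j(Y_{m+\frac{1}{2}}) = a_{j,m} Y_{m+j+\frac{1}{2}}$ and applying the $\frac{1}{2}$-derivation identity to $[L_m, Y_{n+\frac{1}{2}}]$ yields, with $b=-1$, a linear recurrence in $a_{j,\cdot}$ that (at fixed $m+n$, varying $m$) collapses to $a_{j,k} = \alpha_j$ for every $k$. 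The identity on $[Y_{m+\frac{1}{2}}, Y_{n+\frac{1}{2}}]$ is then automatically satisfied.

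For a homogeneous $\varphi_{j+\frac{1}{2}} \in \Delta_{j+\frac{1}{2}}$, the grading restricts $\varphi_{j+\frac{1}{2}}$ to $\varphi_{j+\frac{1}{2}}(L_m) = \alpha_{j,m} Y_{m+j+\frac{1}{2}}$, $\varphi_{j+\frac{1}{2}}(I_m) = \beta_{j,m} Y_{m+j+\frac{1}{2}}$, and $\varphi_{j+\frac{1}{2}}(Y_{m+\frac{1}{2}}) = \gamma_{j,m} L_{m+j+1} + \mu_{j,m} I_{m+j+1}$ for some scalars $\alpha_{j,m},\beta_{j,m},\gamma_{j,m},\mu_{j,m}$. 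I then impose the $\frac{1}{2}$-derivation identity on each of the four nontrivial brackets. The identity on $[L_m,L_n]$ (at $m=0$) forces $\alpha_{j,n}$ to be independent of $n$; call this constant $\gamma_j$. The identity on $[L_m,I_n]$ forces $\beta_{j,n}=0$. The identity on $[L_m, Y_{n+\frac{1}{2}}]$ splits into $L$- and $I$-components: the $L$-component yields $2(2n-k+\frac{a+1}{2})\gamma_{j,k} = (2n-k+j+1)\gamma_{j,n}$ with $k=m+n$, and the $I$-component, after substitution, reduces to $(a+1-2k)\mu_{j,k} = \gamma_j(a+1-2k)$, giving $\mu_{j,m} = \gamma_j$.

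The main obstacle is the vanishing of $\gamma_{j,m}$. If $\gamma_{j,n_0}\neq 0$ for some $n_0$, one may solve for $\gamma_{j,k}$ in closed form and substitute into the $(n\leftrightarrow k)$-swapped identity; equating the two expressions produces the polynomial identity $4(u+\frac{a+1}{2})(v+\frac{a+1}{2}) = (u+j+1)(v+j+1)$ in $u = 2n-k$, $v = 2k-n$, whose $uv$-coefficient equals $3\neq 0$. This is incompatible with the identity holding on the (cofinite) integer locus where $\gamma_{j,n}\gamma_{j,k}\neq 0$, forcing $\gamma_{j,\cdot}\equiv 0$; the finitely many exceptional values of $a$ at which the recurrence's denominators vanish integrally can be checked by direct substitution in the original identity. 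Finally, assembling all homogeneous pieces and re-indexing the three parameter families yields the claimed formulas for $\varphi(L_m)$, $\varphi(I_m)$, and $\varphi(Y_{m+\frac{1}{2}})$.
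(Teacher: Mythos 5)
Your proposal is correct and follows essentially the same route as the paper: decompose by the $\frac{1}{2}\mathbb{Z}$-grading, invoke Theorem~\ref{dif} on the subalgebra $\langle L_m,I_m\rangle$ for integer-degree components, and impose the same ansatz and resulting linear relations (the paper's (2.1)--(2.4)) on half-integer components. The only local differences are cosmetic — you pin down the $Y$-coefficients via $[L_m,Y_{n+\frac12}]$ rather than $[Y_{m+\frac12},Y_{n+\frac12}]$, and you kill $\gamma_{j,\cdot}$ by a polynomial-identity contradiction where the paper substitutes directly; your deferral of the degenerate values of $a$ to "direct substitution" corresponds to the paper's explicit cases A--C and is sound.
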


\begin{proof}
Suppose that $\varphi_j\in \Delta_j({\mathcal W}(a,-1,\frac{1}{2}))$ for     $j\in \mathbb{Z}.$
 By   Theorem \ref{dif}, we have
\begin{equation}\label{der6}
\varphi_j(L_n)=\alpha_j L_{n+j}+\beta_j I_{n+j}, \quad \varphi_j(I_n)=\alpha_{j} I_{n+j}, \quad \varphi_j(Y_{n+\frac{1}{2}})=a_{j,n} Y_{n+j+\frac{1}{2}}.
\end{equation}

Applying $\varphi_j$ to both side of $[Y_{m+\frac{1}{2}},Y_{n+\frac{1}{2}}]=(n-m)I_{m+n+1}$
and using (\ref{der6}), we obtain
\begin{equation}
(n-m-j)a_{j,m}+(n-m+j)a_{j,n}=2(n-m)\alpha_j. 
\label{eq_ii}\end{equation}

\begin{enumerate}
    \item Taking $m=0$ and $n=j$ in (\ref{eq_ii}), we have $ja_{j,j}=j\alpha_j,$ so $a_{j,j}=\alpha_j$ for $j\neq0.$

 \item Taking  $m=j$ in (\ref{eq_ii}), we have   $na_{j,n}=n\alpha_j,$ this implies $a_{j,n}=\alpha_j$ for $n\neq0.$
 
\item Taking  $m=0$ and $n\notin \{0, j\}$ in (\ref{eq_ii}), we have  $a_{j,0}=\alpha_j$ for $j\neq 0.$
\item Taking $j=0$ and   $m=0$ in (\ref{eq_ii}),
we have $a_{0,n}=2\alpha_0-a_{0,0}$ for any $n\neq0$ and choosing  $m\neq n,$ we get  $a_{0,0}=\alpha_0$ and thus $a_{0,n}=\alpha_0$
for all $n\in \mathbb{Z}.$  

\end{enumerate}

As a conclusion, we obtain that $$\varphi_{j}(L_n)=\alpha_{j}L_{n+j}+\beta_jI_{n+j}, \quad \varphi_{j}(I_n)=\alpha_{j}I_{n+j}, \quad \varphi_{j}(Y_{n+\frac{1}{2}})=\alpha_{j}Y_{n+j+\frac{1}{2}}.$$

\medskip 

Let $\varphi_{j+\frac{1}{2}}\in\Delta_{j+\frac{1}{2}}({\mathcal W}(a,-1,\frac{1}{2}))$ for   $j\in \mathbb{Z}.$
Then, we can assume
$$\varphi_{j+\frac{1}{2}}(L_n)=\alpha_{j,n}Y_{n+j+\frac{1}{2}}, \quad \varphi_{j+\frac{1}{2}}(I_n)=\beta_{j,n}Y_{n+j+\frac{1}{2}}, \quad
\varphi_{j+\frac{1}{2}}(Y_{n+\frac{1}{2}})=\gamma_{j,n}L_{n+j+1}+\mu_{j,n}I_{n+j+1} .$$

Applying
$\varphi_{j+\frac{1}{2}}$ to both sides of the first three  relations from the multiplication table, we obtain 
\begin{eqnarray}
(2(n+j)+1-2m+a)\alpha_{j,n}-(2(m+j)+1-2n+a)\alpha_{j,m}=4(n-m)\alpha_{j,m+n},\label{2.1}\end{eqnarray}
\begin{eqnarray}
(2(n+j)+1-2m+a)\beta_{j,n}=4(n-m+a)\beta_{j,m+n}, \label{2.2} \end{eqnarray}
\begin{eqnarray}
(n+j+1-m)\gamma_{j,n}=(2n+1-2m+a)\gamma_{j,m+n}, \label{2.3}\end{eqnarray}
\begin{eqnarray}(n-m-j)\alpha_{j,m}+(n+j+1-m+a)\mu_{j,n}=(2n+1-2m+a)\mu_{j,m+n}.\label{2.4}\end{eqnarray}

\begin{enumerate}
    \item[Case 1.] Now we assume that $j\neq0.$
 Taking $m=0$ in (\ref{2.2}), we have that 
 $(1+2j-2n-3a)\beta_{j,n}=0.$ Hence, $\beta_{j,n}=0$ if $1+2j-2n\neq3a$.
 If $1+2j-2n=3a$ and $m\neq 0,$ then $\beta_{j,m+n}=0$ and 
 $(2(n+j)+1-2m+a)\beta_{j,n}=0.$ The last gives $\beta_{j,n}=0$ for all $n \in \mathbb{Z} $.  
 Similary, from (\ref{2.3}),  we have   $\gamma_{j,m}=0$ for all $m.$
Then by (\ref{2.1}),
we have $\alpha_{j,n}=\alpha_{j,0}$ for all $n\in \mathbb{Z}$.

\begin{itemize}
    \item [A.] Let $a\notin 2\mathbb{Z}+1.$ 
    \begin{enumerate}
        \item 
Taking $m=n=0$ in (\ref{2.4}), we have  $j\alpha_{j,0}=j\mu_{j,0},$ so this implies $\mu_{j,0}=\alpha_{j,0}$ for $j\neq0.$

\item Taking  $n=0$ in (\ref{2.4}), we have   $ (1-2m+a)(\alpha_{j,0}-\mu_{j,m})=0.$ 
 Notice that $a\neq 2m-1,$ hence $\mu_{j,n}=\alpha_{j,0}$ for all  $n\in \mathbb{Z}.$

    \end{enumerate}
\item [B.] Let  $a\in2\mathbb{Z}+1\setminus\{-1\}$. 
\begin{enumerate}

\item Taking $m=0$ in (\ref{2.4}), we have  $(n-j)\alpha_{j,0}=(n-j)\mu_{j,n}.$ 
So this implies $\mu_{j,n}=\alpha_{j,0}$ for $n\neq j.$
\item Taking $m=2j,$  $n=j$ in (\ref{2.4}), we get $(a+1)\mu_{j,j}=(a+1)\alpha_{j,0}.$  It follows $\mu_{j,n}=\alpha_{j,0}$ for all  $n\in \mathbb{Z}.$

\end{enumerate}
\item [C.] Let $a=-1$.
\begin{enumerate}
\item 
Taking  $m=0$ in (\ref{2.4}), we have   $(n-j)\alpha_{j,0}=(n-j)\mu_{j,n}.$ 
So this implies $\mu_{j,n}=\alpha_{j,0}$ for $n\neq j.$
\item Taking $m=2j$ and  $n=-j$ in (\ref{2.4}), we have  $\mu_{j,j}= \alpha_{j,0}.$  It follows $\mu_{j,n}=\alpha_{j,0}$ for all  $n\in \mathbb{Z}.$

\end{enumerate}
\end{itemize}

\medskip 

 \item[Case 2.] Consider the case of $j=0.$
It is easy to see, that (\ref{2.1})
gives $\alpha_{0,n}=\alpha_{0,0}$ for all $n\in \mathbb{Z}.$

\begin{itemize}
    \item [A.] If $a\notin 2\mathbb{Z}+1.$
Taking  $m=0$ in (\ref{2.4}), we have  $n\alpha_{0,0}=n\mu_{0,n},$ so this implies $\mu_{0,n}=\alpha_{0,0}$ for $n\neq0.$
 Taking  $m=2$ and $n=0$ in (\ref{2.4}), we have   
 $(a-1)(\alpha_{0,0}-\mu_{0,0})=0.$  It follows $\mu_{0,n}=\alpha_{0,0}$ for all  $n\in \mathbb{Z}.$

\item [B.] If $a\in2\mathbb{Z}+1\setminus\{-1\}.$ 
Taking $m=0$ in (\ref{2.4}), we have  $n\alpha_{0,0}=n\mu_{0,n},$ so this implies $\mu_{0,n}=\alpha_{0,0}$ for $n\neq 0.$
Taking $m=1$ and   $n=0$ in (\ref{2.4}), we have   $a\mu_{0,0}=a\alpha_{0,0}.$  It follows $\mu_{0,n}=\alpha_{0,0}$ for all  $n\in \mathbb{Z}.$

\item [C.] If $a=-1.$ 
Taking 
 $m=0$ in (\ref{2.4}), we have  $n(\mu_{0,n}-\alpha_{0,0})=0$ and this implies $\mu_{0,n}=\alpha_{0,0}$
for $n\neq 0.$ Taking  $m=1$ and $n=0$
in (\ref{2.4}), we have  $\mu_{0,0}=\alpha_{0,0}.$
\end{itemize}

\end{enumerate}

Summarizing, the Theorem is proved.
\end{proof}

Now we consider the algebra ${\mathcal W}$
with a basis $\{ L_i, I_i,Y_{i+\frac{1}{2}}\ | \ {i \in \mathbb Z}\}$ given by the following 
commutative multiplication table:
\begin{longtable}{lcllcllcllcl} 
$L_iL_j$&$=$&$L_{i+j},$ & $  L_iI_j$&$=$&$I_{i+j},$ & 
$L_iY_{j+\frac{1}{2}}$&$=$&$Y_{i+j+\frac{1}{2}},$ & 
$Y_{i+\frac{1}{2}}Y_{j+\frac{1}{2}}$&$=$&$I_{i+j+1}.$ 
\end{longtable} 

In the following, we aim to classify all transposed Poisson structures on  ${\mathcal W}(a,-1,\frac{1}{2}).$
\begin{theorem} Let $(\mathfrak {L},\cdot,[\cdot,\cdot])$ be a transposed Poisson structure defined on the Lie algebra ${\mathcal W}(a,-1,\frac{1}{2}).$
Then $(\mathfrak {L},\cdot,[\cdot,\cdot])$ is not Poisson algebra and $(\mathfrak {L},\cdot)$ is a mutation of the algebra $\mathcal W$. On the other hand, every mutation of the algebra $\mathcal W$ gives a transposed Poisson structure with the Lie part isomorphic to ${\mathcal W}(a,-1,\frac{1}{2}).$

\end{theorem}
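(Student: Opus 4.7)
The plan is to combine Lemma \ref{l1} with Theorem \ref{main2}. By Lemma \ref{l1}, for every $z \in {\mathcal W}(a,-1,\frac{1}{2})$ the left multiplication operator $L_z : y \mapsto z \cdot y$ in the transposed Poisson structure is a $\frac{1}{2}$-derivation of the Lie algebra, so by Theorem \ref{main2} it is completely determined by three scalar sequences $\{\alpha^z_t\}$, $\{\beta^z_t\}$, $\{\gamma^z_t\}$ that depend linearly on $z$. This gives explicit expressions for $z \cdot L_m$, $z \cdot I_m$, and $z \cdot Y_{m+\frac{1}{2}}$.

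First I would specialize $z$ to each basis element $L_n$, $I_n$, $Y_{n+\frac{1}{2}}$ and impose the commutativity constraints $L_n \cdot L_m = L_m \cdot L_n$, $L_n \cdot I_m = I_m \cdot L_n$, $L_n \cdot Y_{m+\frac{1}{2}} = Y_{m+\frac{1}{2}} \cdot L_n$, and so on for every pair. Comparing coefficients and tracking index shifts will force each family of parameters to be translation-covariant, so that (for instance) $\alpha^{L_n}_t = a_{t-n}$ for a fixed sequence $a_s$; the analogous identities for $I_n$ and $Y_{n+\frac{1}{2}}$ arise from the same sequence $a_s$ together with two further sequences $b_s$ and $c_s$. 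Assembling these into
\[
\phi = \sum_s a_s L_s + \sum_s b_s I_s + \sum_s c_s Y_{s+\frac{1}{2}} \in \mathcal{W},
\]
direct verification shows that $x \cdot y = \phi \star x \star y$, where $\star$ is the commutative associative product on $\mathcal{W}$ defined immediately above the theorem. Hence $(\mathfrak{L},\cdot)$ is the mutation of $\mathcal{W}$ by $\phi$.

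For the converse, given any $\phi \in \mathcal{W}$ the operation $x \cdot y := \phi \star x \star y$ is automatically commutative and associative. The compatibility identity $2 z \cdot [x,y] = [z \cdot x, y] + [x, z \cdot y]$ amounts to the statement that the map $y \mapsto \phi \star z \star y$ is a $\frac{1}{2}$-derivation of the Lie bracket on ${\mathcal W}(a,-1,\frac{1}{2})$; but a direct computation shows that its action on the basis $\{L_m, I_m, Y_{m+\frac{1}{2}}\}$ has precisely the form described in Theorem \ref{main2}, so the identity holds. Finally, to see that the resulting algebra is genuinely not Poisson, it suffices to test the Leibniz rule $[x \cdot y, z] = x \cdot [y,z] + [x,z] \cdot y$ on $x = y = L_1$, $z = L_2$ with the simplest choice $\phi = L_0$: the left-hand side equals $[L_2, L_2] = 0$, while the right-hand side equals $2 L_4 \neq 0$.

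The main obstacle is the bookkeeping of Step 2: the three types of basis elements produce interlocking commutativity equations (e.g.\ $Y_{m+\frac{1}{2}} \cdot L_n = L_n \cdot Y_{m+\frac{1}{2}}$ mixes the $\alpha$ and $\gamma$ sequences for different $z$), and one must carefully track index shifts and the parity of the $\frac{1}{2}\mathbb{Z}$-grading to verify that a single $\phi \in \mathcal{W}$ encodes all the data and recovers exactly the mutation product.
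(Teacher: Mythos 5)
Your proposal follows the paper's proof essentially verbatim: Lemma \ref{l1} plus Theorem \ref{main2} give the form of each left multiplication $\varphi_X$, the pairwise commutativity constraints $\varphi_X(Y)=\varphi_Y(X)$ force all the coefficient sequences to come from a single element $w\in\mathcal W$, the product is then recognized as the mutation $x\cdot y=x\,w\,y$, and the converse holds because left multiplication by $w\star z$ in $\mathcal W$ has exactly the shape described in Theorem \ref{main2}. The one point to tighten is the non-Poisson claim: your Leibniz-rule counterexample is computed only for $w=L_0$, whereas the theorem asserts this for every (nonzero) transposed Poisson structure, so the failure of $[x\cdot y,z]=[x,z]\cdot y+x\cdot[y,z]$ should be exhibited for an arbitrary nonzero $w$ (a routine extension of your computation, and a gap the paper itself leaves implicit).
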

\begin{proof} We aim to describe the multiplication $\cdot.$ By Lemma \ref{l1}, for every element  $X\in\{L_i, I_i, Y_{i+\frac{1}{2}}\ | \  {i\in\mathbb Z}\},$ there is a related
$\frac{1}{2}$-derivation $\varphi_{X}$ of ${\mathcal W}(a,-1,\frac{1}{2})$ such that $\varphi_{X}(Y)= X \cdot Y.$
Then by Theorem \ref{main2}, we have that 
\begin{longtable}{llll}
$\varphi_{X}(L_m)$ & = &$\sum\limits_{t\in{\mathbb{Z}}}\alpha_{t, X}L_{m+t}+\sum\limits_{t\in{\mathbb{Z}}}\beta_{t, X}I_{m+t}+\sum\limits_{t\in{\mathbb{Z}}}\gamma_{t, X}Y_{m+t+\frac{1}{2}},$\\
$\varphi_{X}(I_m)$ & = &$\sum\limits_{t\in{\mathbb{Z}}}\alpha_{t,X}I_{m+t},$ \\
$\varphi_{X}(Y_{m+\frac{1}{2}})$ & = &$\sum\limits_{t\in{\mathbb{Z}}}\alpha_{t,X}Y_{m+t+\frac{1}{2}}+\sum\limits_{t\in{\mathbb{Z}}}\gamma_{t,X}I_{m+t+1}.$
\end{longtable}

Now we consider $\varphi_{X}(Y) = X \cdot Y = Y \cdot X = \varphi_{Y}(X)$ for $X,Y \in   \{ L_i, I_i,Y_{i+\frac{1}{2}}\ | \  {i \in \mathbb Z} \}.$

\begin{enumerate}[I.]
    \item 
Let 
$X=I_m$ and $Y=Y_{n+\frac{1}{2}}.$ Then, from $\varphi_{I_m}(Y_{n+\frac{1}{2}})=\varphi_{Y_{n+\frac{1}{2}}}(I_m),$ we have 
$$\sum\limits_{t\in \mathbb{Z}}\alpha_{t,I_m}Y_{n+t+\frac{1}{2}}+\sum\limits_{t\in \mathbb{Z}}\gamma_{t,I_m}I_{n+t+1}=\sum\limits_{t\in \mathbb{Z}}\alpha_{t,Y_{n+\frac{1}{2}}}I_{m+t}.$$

The last equality gives $\alpha_{t,I_m}=0$  for all $m,t\in\mathbb{Z}.$

\item 
Let $X=I_m$ and $Y= I_n,$ then we have
$$I_m\cdot I_n=\varphi_{I_m}(I_n)=\sum\limits_{t\in{\mathbb{Z}}}\alpha_{t,I_m}I_{n+t}=0.$$

\item 
Let $X=I_m$ and $Y= L_n.$ Then from $\varphi_{I_m}(L_n)=\varphi_{L_n}(I_m),$ we get
$$\sum\limits_{t\in \mathbb{Z}}\beta_{t,I_m}I_{n+t}+\sum\limits_{t\in \mathbb{Z}}\gamma_{t,I_m}Y_{n+t+\frac{1}{2}}=\sum\limits_{t\in \mathbb{Z}}\alpha_{t,L_n}I_{m+t},$$
which implies $\gamma_{t,I_m}=0.$ Hence, 
$I_m\cdot Y_{n+\frac{1}{2}}=0$   for all $m,n\in\mathbb{Z}.$


\item Let $X=L_m$ and $Y= L_0.$ Then the equality $\varphi_{L_m}(L_0)=L_m\cdot L_0=L_0\cdot L_m=\varphi_{L_0}(L_m),$ gives
$$\sum\limits_{t\in \mathbb{Z}}\alpha_{t,L_m}L_{t}+\sum\limits_{t\in \mathbb{Z}}\beta_{t,L_m}I_{t}+\sum\limits_{t\in \mathbb{Z}}\gamma_{t,L_m}Y_{t+\frac{1}{2}}=\sum\limits_{t\in \mathbb{Z}}\alpha_{t,L_0}L_{m+t}+\sum\limits_{t\in \mathbb{Z}}\beta_{t,L_0}I_{m+t}+\sum\limits_{t\in \mathbb{Z}}\gamma_{t,L_0}Y_{m+t+\frac{1}{2}}.$$

Hence, we obtain  $\alpha_{k,L_m}=\alpha_{k-m,L_0},$ $\beta_{k,L_m}=\beta_{k-m,L_0}$ and $\gamma_{k,L_m}=\gamma_{k-m,L_0}.$


\item Let $X=L_0$ and $Y= Y_{n+\frac{1}{2}},$ then from 
$\varphi_{L_0}(Y_{n+\frac{1}{2}})=\varphi_{Y_{n+\frac{1}{2}}}(L_0),$ we get
$$\sum\limits_{t\in \mathbb{Z}}\alpha_{t,L_0}Y_{n+t+\frac{1}{2}}+\sum_{t\in \mathbb{Z}}\gamma_{t,L_0}I_{n+t+1}=\sum\limits_{t\in \mathbb{Z}}\beta_{t,Y_{n+\frac{1}{2}}}I_{t}+\sum\limits_{t\in \mathbb{Z}}\gamma_{t,Y_{n+\frac{1}{2}}}Y_{t+\frac{1}{2}}.$$

Thus, we obtain  $\gamma_{k,Y_{n+\frac{1}{2}}}=\alpha_{k-n, L_0},$ $\beta_{k,Y_{n+\frac{1}{2}}}=\gamma_{k-n,L_0}.$ 
\end{enumerate}

Summarizing all the above parts, we have that the multiplication table of $(\mathfrak, \cdot)$ is given by following non-trivial relations. 
\begin{longtable}{lcl}
$L_m\cdot L_n $&$=$&$\sum\limits_{t\in \mathbb{Z}}\alpha_{t,L_0}L_{m+n+t}+\sum\limits_{t\in \mathbb{Z}}\beta_{t,L_0}I_{m+n+t}+\sum\limits_{t\in \mathbb{Z}}\gamma_{t,L_0}Y_{m+n+t+\frac{1}{2}},$\\
$L_m\cdot I_n$&$=$&$\sum\limits_{t\in \mathbb{Z}}\alpha_{t,L_0}I_{m+n+t},$\\
$L_m\cdot Y_{n+\frac{1}{2}}$&$=$&$\sum\limits_{t\in \mathbb{Z}}\alpha_{t,L_0}Y_{m+n+t+\frac{1}{2}}+\sum\limits_{t\in \mathbb{Z}}\gamma_{t,L_0}I_{m+n+t+1},$\\
$Y_{m+\frac{1}{2}}\cdot Y_{n+\frac{1}{2}}$&$=$&$\sum\limits_{t\in \mathbb{Z}}\alpha_{t,L_0}I_{m+n+t+1}.$
\end{longtable}

Let us define $w:=\sum\limits_{t\in \mathbb{Z}}\alpha_{t,L_0}L_{t}+\sum\limits_{t\in \mathbb{Z}}\beta_{t,L_0}I_{t}+\sum\limits_{t\in \mathbb{Z}}\gamma_{t,L_0}Y_{t+\frac{1}{2}}.$

Then all nonzero multiplications on the basis elements are given below. 
\begin{longtable}{lcllcl}
$L_m\cdot L_n $&$= $&$L_mwL_n, $&$ L_m\cdot I_n$&$=$&$L_mwI_n,$\\
$L_m\cdot Y_{n+\frac{1}{2}}$&$=$&$L_mwY_{n+\frac{1}{2}}, $&$ Y_{m+\frac{1}{2}}\cdot Y_{n+\frac{1}{2}}$&$=$&$Y_{m+\frac{1}{2}}wY_{n+\frac{1}{2}},$\\
\end{longtable}
Analyzing the associative law of $\cdot,$ we can see that the multiplication $\cdot$ is associative.
The algebra $(\mathfrak{L},\cdot)$ is a mutation of $\mathcal  W$. On the other hand,  for any element $w$ the product $\cdot$
gives a transposed Poisson structure defined on the Lie algebra ${\mathcal W}(a,-1,\frac{1}{2})$ for an arbitrary complex number $a.$
It gives the complete statement of the theorem.
\end{proof}

\section{Transposed Poisson   structures on 
 not-finitely graded Witt algebra $W_n(G)$} \label{wnG}

Let $n \in \mathbb Z$ and 
$G$ be any nontrivial additive subgroup of $\mathbb C.$ The algebra
$W_n(G)$ has a basis $\{L_{\alpha, i}  \ | \ \alpha \in G, i \in \mathbb{Z}\}$ and the multiplication table is given below: 
 \begin{equation}\label{ab11}
     [L_{\alpha, i}, L_{\beta, j}] = (\beta-\alpha)L_{\alpha+\beta, i+j} + (j-i) L_{\alpha+\beta, i+j+n},
 \end{equation}
for $\alpha, \beta\in G,$ $i,j\in\mathbb{Z}.$

\begin{theorem} \label{main5}
Let $\varphi$ be a $\frac{1}{2}$-derivation of the algebra $W_n(G),$ then 
there is a finite set $\{ a^{d,m} \}_{d \in G, m \in \mathbb Z}$ 
of complex elements, such that 
$$\varphi(L_{\alpha,i})={ \sum_{d\in G, \ m\in\mathbb{Z}}}a^{d,m}L_{\alpha+d,i+m}.$$
\end{theorem}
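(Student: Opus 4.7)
The plan is to mimic the strategy used for the simpler Witt algebra in Theorem \ref{dif}, exploiting a grading to reduce to a homogeneous situation and then forcing the coefficients to be ``shift-invariant.'' First, I would observe that $W_n(G)$ carries a natural $G$-grading given by $\deg L_{\alpha,i}=\alpha$, since the bracket $[L_{\alpha,i},L_{\beta,j}]$ lies in the degree-$(\alpha+\beta)$ component (the second index $i+j$ or $i+j+n$ is irrelevant for this grading). By Lemma \ref{l01}, $\Delta(W_n(G))=\bigoplus_{d\in G}\Delta_d(W_n(G))$, so it suffices to treat a homogeneous $\frac{1}{2}$-derivation $\varphi$ of $G$-degree $d$, which I would write in the shift-parametrized form $\varphi(L_{\alpha,i})=\sum_{m\in\mathbb{Z}}c^{d,m}_{\alpha,i}\,L_{\alpha+d,\,i+m}$ (a finite sum for each $(\alpha,i)$, since $\varphi(L_{\alpha,i})$ is a single element of the algebra).

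Next, I would substitute this expansion into the $\frac{1}{2}$-derivation identity applied to $[L_{\alpha,i},L_{\beta,j}]=(\beta-\alpha)L_{\alpha+\beta,i+j}+(j-i)L_{\alpha+\beta,i+j+n}$ and match the coefficient of $L_{\alpha+\beta+d,\,i+j+p}$ for each $p\in\mathbb{Z}$. A direct but slightly tedious computation produces an identity relating $c^{d,p}$ and $c^{d,p-n}$ at the three index pairs $(\alpha,i)$, $(\beta,j)$, and $(\alpha+\beta,i+j)$ or $(\alpha+\beta,i+j+n)$. Before extracting constraints, I would verify the sanity check that the putative ``shift derivation'' $\varphi_{d,m}\colon L_{\alpha,i}\mapsto L_{\alpha+d,i+m}$ is indeed a $\frac{1}{2}$-derivation (a short computation: both sides of the identity collapse to $(\beta-\alpha)L_{\alpha+\beta+d,i+j+m}+(j-i)L_{\alpha+\beta+d,i+j+m+n}$), so that the claim reduces to showing these span $\Delta(W_n(G))$.

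To show $c^{d,m}_{\alpha,i}=c^{d,m}_{0,0}$ is independent of $(\alpha,i)$, I would specialize the coefficient identity. Setting $i=j=0$ kills the second term on the left-hand side and yields an equation relating $c^{d,p}_{\alpha+\beta,0}$ to $c^{d,p}_{\alpha,0}$, $c^{d,p}_{\beta,0}$, $c^{d,p-n}_{\alpha,0}$, $c^{d,p-n}_{\beta,0}$. Setting further $\alpha=0$ and exploiting the freedom in $\beta\in G$ (as in the analysis of $\frac{1}{2}$-derivations of ${\mathcal W}(a,b)$), I would solve for $c^{d,p}_{\beta,0}$ in terms of $c^{d,p}_{0,0}$, handling the edge case $\beta=d$ by a second substitution (e.g.\ $\alpha=-\gamma,\beta=\gamma+d$ for a generic $\gamma\in G$). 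A symmetric argument with $\alpha=\beta=0$ but $j$ arbitrary, together with the bracket $[L_{0,0},L_{0,j}]=j L_{0,j+n}$, would propagate independence from the first index to independence from the second. Finally, summing the homogeneous components and setting $a^{d,m}:=c^{d,m}_{0,0}$ yields the stated formula, with finite support because $\varphi(L_{0,0})=\sum_{d,m}a^{d,m}L_{d,m}$ is a single element of the algebra.

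The main obstacle is the decoupling argument in the previous paragraph: the two-term bracket introduces a coupling between $c^{d,p}$ and $c^{d,p-n}$ that has no analogue in the proof of Theorem \ref{dif}, so several distinct substitutions must be combined and the zero/nonzero cases of $d$, $n$, and various linear combinations in $G$ must be handled separately. I expect the bulk of the work will be organizing these cases rather than any single hard idea.
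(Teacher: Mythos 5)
Your plan is correct and follows essentially the same route as the paper: decompose $\Delta(W_n(G))$ via the $G$-grading using Lemma \ref{l01}, expand a homogeneous degree-$d$ component with unknown coefficients, substitute into the bracket relation, and use several index specializations (including $\pm\alpha$-type symmetrizations to break the coupling between $c^{d,p}$ and $c^{d,p-n}$) to show the coefficients depend only on the shift. The only differences are cosmetic or organizational: the paper disposes of $n=0$ by citing \cite[Proposition 16]{kk23}, indexes images absolutely rather than by shift, and does not bother to verify separately that the shift maps are $\frac{1}{2}$-derivations.
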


\begin{proof}
The algebra $W_0(G)$ is a generalized Witt algebra considered in \cite{kk23}.
Thanks to \cite[Proposition 16]{kk23}, the Theorem is proved for $n=0.$
Below, we will consider only the case of $n\neq 0.$

The   algebra  $W_n(G)$ is $G$-graded
\begin{center}
    $W_n(G)=\bigoplus\limits_{\alpha\in G}W_n(G)_{\alpha},$ where $ W_n(G)_{\alpha}=\langle L_{\alpha,i} \ | \ i\in \mathbb{Z}\rangle$ for $\alpha \in G.$
\end{center}
 By Lemma \ref{l01} we get 
$\triangle(W_n(G))=\bigoplus\limits_{\alpha\in G}\triangle_{\alpha}(W_n(G)).$

Let $\varphi_d \in \Delta_{d}(W_n(G)).$   We can assume that 
$\varphi_d(L_{\alpha,i})=\sum\limits_{k\in\mathbb{Z}}a_{\alpha,i}^{d,k}L_{\alpha+d,k}.$ To determine these coefficients, we start with applying $\varphi_d$
to both side of (\ref{ab11}) and obtain
\begin{equation}\label{maini}
\begin{array}{cl}
2\sum\limits_{k\in\mathbb{Z}} & \left((\beta-\alpha)a_{\alpha+\beta,i+j}^{d,k}+(j-i)a_{\alpha+\beta,i+j+n}^{d,k}\right)  L_{\alpha+\beta+d,k}=\\[2mm]
&\multicolumn{1}{r}{=\sum\limits_{k\in\mathbb{Z}}a_{\alpha,i}^{d,k}\left((\beta-\alpha-d)L_{\alpha+\beta+d,j+k}+(j-k)L_{\alpha+\beta+d,j+k+n}\right)+}\\[2mm]
&\multicolumn{1}{r}{+\sum\limits_{k\in\mathbb{Z}}a_{\beta,j}^{d,k}\left((\beta-\alpha+d)L_{\alpha+\beta+d,i+k}+(k-i)L_{\alpha+\beta+d,i+k+n}\right).}
\end{array}
\end{equation}

\noindent
Letting $\beta=i=j=0$  in (\ref{maini}) for $k\in\mathbb{Z} ,$ we get  
\begin{equation} \label{k1}
(d-\alpha) a_{\alpha,0}^{d,k}+(k-n)a_{\alpha,0}^{d,k-n}=(d-\alpha) a_{0,0}^{d,k}+(k-n)a_{0,0}^{d,k-n}.
\end{equation}
Putting $\beta=-\alpha,$ $i=j=0$ in (\ref{maini}) with (\ref{k1}) for $k\in \mathbb{Z}$, we  get 
\begin{equation}\label{k2}
  2a_{0,0}^{d,k}= a_{\alpha,0}^{d,k}+a_{-\alpha,0}^{d,k}. 
\end{equation}
In the equation (\ref{k1}) replacing $\alpha$ with $-\alpha,$ we obtain:
\begin{equation} \label{k3}
(d+\alpha) a_{-\alpha,0}^{d,k}+(k-n)a_{-\alpha,0}^{d,k-n}=(d+\alpha) a_{0,0}^{d,k}+(k-n)a_{0,0}^{d,k-n}
\end{equation}
and adding the equations (\ref{k1}) and (\ref{k3}) with together (\ref{k2}) we deduce 
$$a_{\alpha,0}^{d,k}=a_{0,0}^{d,k} \quad \text{for all} \quad \alpha \in G.$$

\noindent
Putting $\beta=-\alpha,$ $j=0$ in (\ref{maini}), we  get 
\begin{equation}\label{k4}
  (d+2\alpha)a_{\alpha,i}^{d,k}+(k-n)a_{\alpha,i}^{d,k-n}=4\alpha a_{0,i}^{d,k}+2ia_{0,i+n}^{d,k}+(d-2\alpha)a_{0,0}^{d,k-i}+(k-2i-n)a_{0,0}^{d,k-i-n}. 
\end{equation}

\noindent
Letting $\beta=j=0$ and  in (\ref{maini}), we deduce
\begin{equation}\label{k5}
  (d-\alpha)a_{\alpha,i}^{d,k}+(k-n)a_{\alpha,i}^{d,k-n}=2ia_{\alpha,i+n}^{d,k}+(d-\alpha)a_{0,0}^{d,k-i}+(k-2i-n)a_{0,0}^{d,k-i-n}. 
\end{equation}

\noindent
Subtracting (\ref{k4}) from (\ref{k5}) it gives 
\begin{equation}\label{k6}
\alpha(3a_{\alpha,i}^{d,k}+a_{0,0}^{d,k-i}-4a_{0,i}^{d,k})=2i(a_{0,i+n}^{d,k}-a_{\alpha,i+n}^{d,k}). 
\end{equation}
\noindent
Now consider the case of  $\beta=i=0$ in (\ref{maini}), we have
\begin{equation}\label{k7}
  (d-\alpha)a_{0,j}^{d,k}+(k-n)a_{0,j}^{d,k-n}=2ja_{\alpha,j+n}^{d,k}-2\alpha a_{\alpha,j}^{d,k} +(d+\alpha)a_{0,0}^{d,k-j}+(k-2j-n)a_{0,0}^{d,k-j-n}. 
\end{equation}
Setting $\alpha=0$ in (\ref{k5}) and substituting  $j$ with $i$ in (\ref{k7}) then combining them it derives 
\begin{equation}\label{k8}
\alpha(2a_{\alpha,i}^{d,k}-a_{0,0}^{d,k-i}-a_{0,i}^{d,k})=2i(a_{\alpha,i+n}^{d,k}-a_{0,i+n}^{d,k}).
\end{equation}

\noindent
Finally, adding up (\ref{k6}) and (\ref{k8}),  we have  $a_{\alpha,i}^{d,k}=a_{0,i}^{d,k},$
and from  the equation (\ref{k6}) it follows that
$a_{\alpha,i}^{d,k}=a_{0,i}^{d,k}=a_{0,0}^{d,k-i}.$
Thus, it proves that $\varphi_d$ has the form 
$$\varphi_d(L_{\alpha,i})=\sum_{k\in\mathbb{Z}}a_{0,0}^{d,k-i}L_{\alpha+d,k}.$$
\end{proof}

Now, we aim to classify all transposed Poisson structures on $W_n(G).$
Let $\mathcal{W}$ be a commutative associative algebra with basis $\{L_{\alpha, i} \ | \ \alpha \in G, i \in \mathbb{Z}\}$ and multiplication $$L_{\alpha, i} L_{\beta, j} = L_{\alpha+\beta, i+j}.$$

\begin{theorem}\label{tpwn} Let $(\mathfrak {L},\cdot,[\cdot,\cdot])$ be a transposed Poisson structure defined on the Lie algebra $W_n(G).$ Then $(\mathfrak {L},\cdot,[\cdot,\cdot])$ is not Poisson algebra and $(\mathfrak {L},\cdot)$ is a mutation of the algebra $\mathcal{W}$. On the other hand, 
every mutation of the algebra $\mathcal{W}$ gives a transposed Poisson structure with the Lie part isomorphic to the algebra $W_n(G).$
\end{theorem}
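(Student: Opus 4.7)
The plan is to combine Lemma \ref{l1} with the classification of $\frac{1}{2}$-derivations from Theorem \ref{main5} to pin down the commutative product $\cdot$, and then to recognize the outcome as a mutation of $\mathcal{W}$.

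For each basis vector $X=L_{\alpha,i}$, the left multiplication $\varphi_X:Y\mapsto X\cdot Y$ is a $\frac{1}{2}$-derivation by Lemma \ref{l1}, so by Theorem \ref{main5} there is a finitely supported family $\{a^{d,m}_X\}$ with $\varphi_X(L_{\beta,j})=\sum_{d,m}a^{d,m}_X L_{\beta+d,\,j+m}$. Commutativity of $\cdot$ means $\varphi_X(Y)=\varphi_Y(X)$; comparing the coefficient of $L_{\gamma,k}$ on both sides for $X=L_{\alpha,i}$, $Y=L_{\beta,j}$ yields $a^{\gamma-\beta,\,k-j}_{L_{\alpha,i}}=a^{\gamma-\alpha,\,k-i}_{L_{\beta,j}}$. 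Specializing $\beta=0$, $j=0$ and setting $b^{d,m}:=a^{d,m}_{L_{0,0}}$ one obtains $a^{d,m}_{L_{\alpha,i}}=b^{d-\alpha,\,m-i}$, hence
\begin{equation*}
L_{\alpha,i}\cdot L_{\beta,j}=\sum_{d,m}b^{d,m}L_{\alpha+\beta+d,\,i+j+m}=L_{\alpha,i}\,w\,L_{\beta,j},
\end{equation*}
where $w:=\sum_{d,m}b^{d,m}L_{d,m}\in\mathcal{W}$ and the right-hand product is taken in $\mathcal{W}$. This identifies $(\mathfrak{L},\cdot)$ as the mutation of $\mathcal{W}$ by $w$; associativity is inherited from the associativity and commutativity of $\mathcal{W}$.

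For the converse, given any $w\in\mathcal{W}$ the product $a\cdot b:=awb$ is automatically commutative and associative. The compatibility identity $2z\cdot[x,y]=[z\cdot x,y]+[x,z\cdot y]$ reduces by bilinearity to basis vectors; using (\ref{ab11}) and the definition of $\cdot$, both sides expand to
\begin{equation*}
2\sum_{d,m}b^{d,m}\Big((\gamma-\beta)\,L_{\alpha+\beta+\gamma+d,\,i+j+k+m}+(k-j)\,L_{\alpha+\beta+\gamma+d,\,i+j+k+m+n}\Big),
\end{equation*}
establishing the transposed Poisson axiom.

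Finally, to rule out a simultaneous Poisson structure, one tests the Leibniz rule $[x,y\cdot z]=[x,y]\cdot z+y\cdot[x,z]$ on basis vectors. A direct expansion gives the obstruction $\sum_{d,m}b^{d,m}\big((\alpha+d)L_{\alpha+\beta+\gamma+d,\,i+j+k+m}+(i+m)L_{\alpha+\beta+\gamma+d,\,i+j+k+m+n}\big)$, which cannot vanish for all $\alpha\in G$, $i\in\mathbb{Z}$ unless every $b^{d,m}=0$. Since the transposed Poisson axioms force $\cdot$ to be nonzero, some $b^{d,m}\neq 0$ and the Poisson identity fails. The main technical obstacle is the coefficient bookkeeping in deducing $a^{d,m}_{L_{\alpha,i}}=b^{d-\alpha,\,m-i}$; once that is in hand, the remaining verifications are direct applications of (\ref{ab11}).
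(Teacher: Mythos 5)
Your proposal is correct and follows essentially the same route as the paper: left multiplications are $\frac{1}{2}$-derivations by Lemma \ref{l1}, Theorem \ref{main5} plus commutativity at $(L_{0,0},L_{\alpha,i})$ forces $a^{d,m}_{L_{\alpha,i}}=b^{d-\alpha,m-i}$, and the product is recognized as the mutation of $\mathcal{W}$ by $w=\sum b^{d,m}L_{d,m}$. You additionally write out the two verifications the paper only asserts (that every mutation satisfies the transposed Poisson identity, and that the Leibniz rule fails unless all $b^{d,m}=0$), and both computations check out.
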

\begin{proof} We aim to describe the multiplication $\cdot.$ By Lemma \ref{l1}, for every element  $X\in \{L_{\alpha, i} \ | \ \alpha \in G, i \in \mathbb{Z}\},$ there is a related
$\frac{1}{2}$-derivation $\varphi_{l}$ of $W_n(G)$ such that 
$\varphi_{X}(Y)=X\cdot Y.$ By Theorem \ref{main5}, for each $\varphi \in \Delta(W_n(G)),$ we have
$$\varphi(L_{\alpha,i})={ \sum_{d\in G, \ k\in\mathbb{Z}}}a^{d,k}L_{\alpha+d,i+k}.$$

\noindent
Now we consider  the multiplication $L_{0,0}\cdot L_{\alpha,i}$ for all $i\in \mathbb{Z}\setminus\{0\}.$ Then we have
$$\varphi_{L_{0,0}}(L_{\alpha,i})\ =\ L_{\alpha,i} \cdot L_{0,0}= L_{0,0}\cdot L_{\alpha,i}\ =\ \varphi_{L_{\alpha,i}}(L_{0,0}),$$
which implies
$$\sum\limits_{d\in G}\sum\limits_{k\in\mathbb{Z}}a_{L_{0,0}}^{d,k}L_{\alpha+d,i+k}
=\sum\limits_{d\in G}\sum\limits_{k\in\mathbb{Z}}a_{L_{\alpha,i}}^{d,k}L_{d,k}.$$

\noindent Thus, we get $a_{L_{\alpha,i}}^{d,k}=a_{L_{0,0}}^{d-\alpha,k-i}$.
Now,    we determine the multiplication $L_{\alpha,i}\cdot L_{\beta,j}:$

\begin{center}
    $L_{\alpha,i}\cdot L_{\beta,j}=\sum\limits_{d\in G}\varphi_{d,L_{\alpha,i}}(L_{\beta,j})=
\sum\limits_{d\in G}\sum\limits_{k\in\mathbb{Z}}a_{L_{\alpha,i}}^{d,k}L_{\beta+d,k+j}
=\sum\limits_{d\in G}\sum\limits_{k\in\mathbb{Z}}a_{L_{0,0}}^{d-\alpha,k-i}L_{\beta+d,k+j}.$
\end{center}
Hence, we get 
 $$L_{\alpha,i}\cdot L_{\beta,j}=\sum_{d\in G}\sum_{k\in\mathbb{Z}}a_{L_{0,0}}^{d,k}L_{d+\alpha+\beta,k+i+j},$$

\noindent
If we denote by $w= \sum\limits_{d\in G}\sum\limits_{k\in\mathbb{Z}}a_{L_{0,0}}^{d,k}L_{d,k},$ then we obtain
$L_{\alpha,i}\cdot L_{\beta,j}=L_{\alpha,i}w L_{\beta,j}.$
Analyzing the associative law of $\cdot,$ we can see that the multiplication $\cdot$ is associative. Thus, we get that the algebra $(\mathfrak{L},\cdot)$ is a mutation of $\mathcal{W}$. On the other hand, for any element $w$ the product $\cdot$
gives a transposed Poisson structure defined on the Lie algebra $W_n(G).$  It is easy to see that $\mathfrak{L}\cdot [\mathfrak{L},\mathfrak{L}]\neq 0$ and  $(\mathfrak{L},\cdot,[\cdot,\cdot])$ is a non-Poisson algebra.
It gives the complete statement of the theorem.
\end{proof}

\begin{rem}
It should be mentioned that in \cite{chs15} 
the algebra $W_{1}(G)$ was considered under the restriction $\mathbb Z_+$ instead of 
$\mathbb Z$.
Following this idea, 
let  $G$ be a nontrivial additive subgroups of $\mathbb C$,  and  
$F$ be a nontrivial additive subsemigroup with a neutral element of $\mathbb C$ and $\mathfrak{f} \in F,$
it is possible to define a Lie algebra $W_n(G,F).$
The algebra
$W_n(G,F)$ has a basis $\{L_{g, f}  \ | \ g \in G, f \in F\}$ and the multiplication table is given below: 
 \begin{equation*}\label{ab1}
     [L_{g_1, f_1}, L_{g_2, f_2}] = (g_2-g_1)L_{g_1+g_2, f_1+f_2} + (f_2-f_1) L_{g_1+g_2, f_1+f_2+\mathfrak{f}}.
 \end{equation*}
 The description of $\frac 12$-derivations and transposed Poisson structures on $W_n(G,F)$
 is completely similar to Theorems \ref{main5} and \ref{tpwn}.

\end{rem}

\section{Transposed Poisson  structures on 
 not-finitely graded Heisenberg-Witt algebra $HW_n(G).$
} 

Let $n \in \mathbb Z$ and $G$ be any nontrivial additive subgroup of $\mathbb{C}$.
The Lie algebra  $HW_n(G)$  has a 
  basis $\{L_{\alpha, i}, \   H_{\alpha, i} \ | \ \alpha \in G, \ i \in \mathbb{Z}\}$ and the following multiplication table:  
 \begin{eqnarray}
     &&[L_{\alpha, i}, L_{\beta, j}] = (\beta - \alpha)L_{\alpha+\beta, i+j} + (j-i) L_{\alpha+\beta, i+j+n},\label{ad}\\
 &&[L_{\alpha, i}, H_{\beta, j}] = \beta H_{\alpha+\beta, i+j} + j H_{\alpha+\beta, i+j+n},\label{cd}\\
 &&[H_{\alpha, i}, H_{\beta, j}] = 0.\label{bd}
 \end{eqnarray}

\begin{theorem} \label{main3}
Let $\varphi$ be a $\frac{1}{2}$-derivation of the algebra $HW_n(G).$ 
\begin{itemize}

\item If $n=0$, then  
\begin{longtable}{lcl}
$\varphi(L_{\alpha,i}) $&$ =$&$\sum\limits_{d\in G \cap \mathbb{Z}}a^{d,-d}L_{\alpha+d,i-d},$\\
$\varphi(H_{\alpha,i}) $&$= $&$\sum\limits_{d\in G \cap \mathbb{Z}}a^{d,-d}H_{\alpha+d,i-d}.$
\end{longtable}
\item If $n\neq0,$ then 
\begin{longtable}{lcl}
$\varphi(L_{\alpha,i})$&$=$&$\sum\limits_{d\in G\setminus\{0\}}\sum\limits_{k\in\mathbb{Z}}a^{d,k}L_{\alpha+d,k+i}+a^{0,0}L_{\alpha,i},$\\
$\varphi(H_{\alpha,i})$&$=$&$\sum\limits_{d\in G\setminus\{0\}}\sum\limits_{k\in\mathbb{Z}}a^{d,k}H_{\alpha+d,k+i}+a^{0,0}H_{\alpha,i},$
\end{longtable}
where
  $a^{d,kn}=0$ for $k\in\mathbb{Z}_+\setminus\{0\}. $
If $k=tn+m$, then 
\begin{enumerate}
\item for $t\in\mathbb{Z}_+\setminus\{0\},$ $m=\overline{1,|n|-1},$ then \ 
$a^{d,k} ={(-1)^t{d^{-t}}\prod\limits_{p=1}^{t-1}(pn+m)}a^{d,m},$ 
\item  for $t\in\mathbb{Z}_-\setminus\{0\},$ $m=\overline{0,|n|-1},$ then \
$a^{d,k} ={d^t}\big(\prod\limits_{p=1}^{t}(pn-m)\big)^{-1}a^{d,m}.$
\end{enumerate}
\end{itemize}
\end{theorem}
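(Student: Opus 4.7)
The plan is to mimic the strategy used in Theorem~\ref{main5}. The algebra $HW_n(G)$ carries a natural $G$-grading with $(HW_n(G))_\alpha = \langle L_{\alpha,i},\, H_{\alpha,i} \mid i \in \mathbb{Z}\rangle$, so by Lemma~\ref{l01} it suffices to classify homogeneous $\frac{1}{2}$-derivations $\varphi_d$ of degree $d \in G$. I would fix such $\varphi_d$ and write
\begin{eqnarray*}
\varphi_d(L_{\alpha,i}) &=& \sum_{k\in\mathbb{Z}} a_{\alpha,i}^{d,k}\, L_{\alpha+d,k} \;+\; \sum_{k\in\mathbb{Z}} b_{\alpha,i}^{d,k}\, H_{\alpha+d,k},\\
\varphi_d(H_{\alpha,i}) &=& \sum_{k\in\mathbb{Z}} c_{\alpha,i}^{d,k}\, L_{\alpha+d,k} \;+\; \sum_{k\in\mathbb{Z}} e_{\alpha,i}^{d,k}\, H_{\alpha+d,k},
\end{eqnarray*}
and systematically apply $\varphi_d$ to the three bracket identities (\ref{ad})--(\ref{bd}), splitting each resulting equation into its $L$- and $H$-components.

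The cross-coefficients $c$ and $b$ should be eliminated first. Because $[L_{\alpha,i},H_{\beta,j}]$ produces no $L$-components, reading off the $L$-part of the $[L,H]$-identity gives a closed linear system purely in $c$: setting $\alpha=i=0$ and then comparing with the specialisation $\alpha = -\beta$ (while varying $\beta$ through the nontrivial group $G$) yields $c_{0,0}^{d,m}=c_{\beta,0}^{d,m}=0$, and the same equation with arbitrary $i$ propagates the vanishing to all $c_{\alpha,i}^{d,m}$. A parallel analysis of the $H$-components of the $[L,L]$-identity, combined with antisymmetry under $(\alpha,i)\leftrightarrow(\beta,j)$, produces a recursion in $b_{\beta,0}^{d,m}$ whose only solution compatible with finite support is zero; iterating in $i$ then gives $b_{\alpha,i}^{d,m}=0$. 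The $L$-components of the $[L,L]$-identity now coincide word-for-word with the equation analysed in the proof of Theorem~\ref{main5}, so $a_{\alpha,i}^{d,k} = a^{d,k-i}$ for a single scalar family $a^{d,k}:=a_{0,0}^{d,k}$.

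The decisive new information comes from the $H$-components of the $[L,H]$-identity. Substituting the form of $a$, setting $\alpha=i=0$ and comparing with the specialisation $\alpha=-\beta$ forces $e_{\beta,0}^{d,m}$ to be independent of $\beta$ and equal to $a^{d,m}$; exploiting that both sides must agree as polynomials in $\beta\in G$ yields the fundamental recurrence
\[
d\,a^{d,m} + (m-n)\,a^{d,m-n} = 0 \qquad (d\in G,\ m\in\mathbb{Z}).
\]
A direct calculation then verifies that $e_{\alpha,i}^{d,m}=a^{d,m-i}$ solves the full equation in $(\alpha,i)$, as required by the statement.

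Everything then reduces to solving the recurrence. For $n=0$ it becomes $(d+m)a^{d,m}=0$, so $a^{d,m}$ can be nonzero only when $m=-d$, and since $m\in\mathbb{Z}$ we need $d\in G\cap\mathbb{Z}$, reproducing the first half of the statement. For $n\neq0$ and $d\neq0$ one writes $m=tn+r$ with $r\in\{0,1,\ldots,|n|-1\}$: when $r=0$ and $t\geq1$ the iterated product collapses at $p=0$ giving $a^{d,tn}=0$, while for $r\neq0$ one unwinds the recurrence forward and backward in $t$ to produce the closed formulas in items~(1) and~(2). For $d=0$ the recurrence forces $a^{0,k}=0$ for $k\neq0$, leaving the trivial $a^{0,0}$ term. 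The step I expect to be the main obstacle is the extraction of the recurrence from the $[L,H]$-identity: the coefficients $a$ and $e$ appear there simultaneously and must be disentangled by comparing the $\beta$-dependence of the two sides, which requires using that $G$ is nontrivial in a slightly delicate way. The vanishing of the $b$-coefficients is bookkeeping-intensive but conceptually parallel to the analysis of $c$.
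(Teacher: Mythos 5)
Your proposal is correct and follows essentially the same route as the paper's proof: decompose via the $G$-grading (Lemma~\ref{l01}), kill the cross-coefficients between the $L$- and $H$-parts, import the $[L,L]$-analysis from Theorem~\ref{main5}, extract the recurrence $d\,a^{d,m}+(m-n)a^{d,m-n}=0$ from the $H$-component of the $[L,H]$-identity by comparing $\beta$-dependence, and solve it case by case. The only cosmetic deviations are that the paper additionally exploits $[H,H]=0$ when eliminating the $L$-components of $\varphi(H)$, and that the vanishing of the $H$-components of $\varphi(L)$ comes out of a direct coefficient comparison in $\alpha$ (via the relation $b_{\alpha,0}^{d,k}=b_{0,0}^{d,k}$ and the factor-$2$ mismatch) rather than a finite-support argument, but both lead to the same conclusion.
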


\begin{proof}
The  algebra  $HW_n(G)$ is $G$-graded
\begin{center}
$HW_n(G)=\bigoplus\limits_{\alpha\in G}(HW_n(G))_{\alpha},  \quad  (HW_n(G))_{\alpha}=\langle L_{\alpha,i}, H_{\alpha,i} \ |\ i\in \mathbb{Z} \rangle\quad \textrm{for} \ \  \alpha \in G.$\end{center}
By Lemma \ref{l01}, we get 
$\triangle(HW_n(G))=\bigoplus\limits_{\alpha\in G}\triangle_{\alpha}(HW_n(G)).$
Suppose that $\varphi_d\in \Delta_d(H W_n(G)).$ 
The subalgebra $\langle L_{\alpha, i}  \ | \ \alpha \in G, \ i \in \mathbb{Z}\rangle$
is isomorphic to the algebra  $W_n(G)$ considered in the  Section \ref{wnG}. Then by Theorem \ref{main5}, we can assume that 
\begin{longtable}{lcl}
$\varphi_d(L_{\alpha,i})$&$=$&$\sum\limits_{k\in\mathbb{Z}}a^{d,k-i}L_{\alpha+d,k}+\sum\limits_{k\in\mathbb{Z}}b_{\alpha,i}^{d,k}H_{\alpha+d,k},$\\
$\varphi_d(H_{\alpha,i})$&$=$&$\sum\limits_{k\in\mathbb{Z}}f_{\alpha,i}^{d,k}L_{\alpha+d,k}+\sum\limits_{k\in\mathbb{Z}}c_{\alpha,i}^{d,k}H_{\alpha+d,k}.$
\end{longtable}

\begin{enumerate}[I.]
    \item 
    To determine the  coefficients $f_{\alpha,i}^{d,k}$, we apply $\varphi_d$
to (\ref{bd}) and obtain
\begin{equation}\label{n1}
 \sum\limits_{k\in\mathbb{Z}}f_{\alpha,i}^{d,k} \left(\beta H_{\alpha+\beta+d,j+k}+jH_{\alpha+\beta+d,j+k+n}\right) =\sum\limits_{k\in\mathbb{Z}}f_{\beta,j}^{d,k} \left(\alpha H_{\alpha+\beta+d,i+k}+iH_{\alpha+\beta+d,i+k+n}\right). 
\end{equation}

\noindent Taking $\beta=i=j=0$ in (\ref{n1}), it gives 
$f_{0,0}^{d,k}=0.$

\noindent Taking $\beta=-\alpha,$ $i=j=0$ in (\ref{n1}), we have 
$f_{-\alpha,0}^{d,k}=-f_{\alpha,0}^{d,k}.$

\noindent Taking $\beta=-\alpha,$ $j=0$ in (\ref{n1}), we have 
\begin{equation}\label{n6}
 \alpha f_{\alpha,i}^{d,k}=-\alpha f_{-\alpha,0}^{d,k-i}-if_{-\alpha,0}^{d,k-i-n}.    
\end{equation}
Next, to determine the coefficients, applying $\varphi_d$
to both side of (\ref{cd}) obtain
{\small \begin{equation}\label{n3}
\begin{array}{rl}
 2\sum\limits_{k\in\mathbb{Z}} &\left(\beta f_{\alpha+\beta,i+j}^{d,k} +j f_{\alpha+\beta,i+j+n}^{d,k}\right)   L_{\alpha+\beta+d,k} \ =\\
 \sum\limits_{k\in\mathbb{Z}}  & f_{\beta,j}^{d,k} \left((\beta-\alpha+d) L_{\alpha+\beta+d,i+k}+(k-i)L_{\alpha+\beta+d,i+k+n}\right).
 \end{array}
\end{equation}
and
\begin{equation}\label{n4}
\begin{array}{ll}
2\sum\limits_{k\in\mathbb{Z}}&\left(\beta c_{\alpha+\beta,i+j}^{d,k}+jc_{\alpha+\beta,i+j+n}^{d,k}\right)H_{\alpha+\beta+d,k} \ =\\[2mm]
& \sum\limits_{k\in\mathbb{Z}}a^{d,k-i} \left(\beta H_{\alpha+\beta+d,j+k}+jH_{\alpha+\beta+d,j+k+n}\right)+ \\ 
[2mm] & \sum\limits_{k\in\mathbb{Z}}c_{\beta,j}^{d,k}\left((\beta+d)H_{\alpha+\beta+d,i+k}+kH_{\alpha+\beta+d,i+k+n}\right).
\end{array}
\end{equation}} 

\noindent Putting $\beta=-\alpha,$ $i=j=0$ in (\ref{n3}), we obtain 
\begin{equation}\label{n2}
(d-2\alpha)f_{-\alpha,0}^{d,k}+(k-n)f_{-\alpha,0}^{d,k-n}=0.    \end{equation}
Replacing $-\alpha$ with $\alpha$ in (\ref{n2}), it gives  
\begin{equation}\label{n5}
(d+2\alpha)f_{\alpha,0}^{d,k}+(k-n)f_{\alpha,0}^{d,k-n}=0.    \end{equation}
Combining (\ref{n2}) and (\ref{n5}), and using $f_{-\alpha,0}^{d,k}=-f_{\alpha,0}^{d,k},$ we can get
$f_{\alpha,0}^{d,k}=0.$
Moreover, from (\ref{n6}), we have $f_{\alpha,i}^{d,k}=0$ for $k\in\mathbb{Z} .$

\item Now we identify  the coefficients $c_{\alpha,i}^{d,k},$ so 
taking $\alpha=j=0$ in (\ref{n4}), we deduce 
\begin{equation}\label{n7}
  2\beta c_{\beta,i}^{d,k}=\beta a^{d,k-i}+(\beta+d)c_{\beta,0}^{d,k-i} +(k-i-n)c_{\beta,0}^{d,k-i-n}. 
\end{equation}

\noindent  Next taking $\alpha=-\beta,$ $j=0$ in (\ref{n4}), we have 
\begin{equation}\label{n8}
  2\beta c_{0,i}^{d,k}=\beta a^{d,k-i}+(\beta+d)c_{\beta,0}^{d,k-i} + (k-i-n)c_{\beta, 0}^{d,k-i-n}. 
\end{equation}

\noindent Combining (\ref{n7}) and (\ref{n8}), we obtain that 
$$c_{\beta,i}^{d,k}=c_{0,i}^{d,k}.$$

\noindent Now putting $\beta=0,$ $i=0$ in (\ref{n7}), we have 
\begin{equation}\label{n9}
   dc_{0,0}^{d,k}+(k-n)c_{0,0}^{d,k-n}=0. 
\end{equation}
\noindent Letting $\beta\neq 0,$ $i=0$ in (\ref{n7}), we get 
\begin{equation}\label{nm7}
 \beta (a^{d,k}-c_{0,0}^{d,k})+d c_{0,0}^{d,k} +(k-n)c_{0,0}^{d,k-n}=0. 
\end{equation}

Thus, we get 
$c_{0,0}^{d,k}=a^{d,k},$ which implies  $c_{0,i}^{d,k}=a^{d,k-i},$ and 
\begin{equation}\label{n10}
   da^{d,k}+(k-n)a^{d,k-n}=0. 
\end{equation}

\begin{enumerate}
    \item[A.]
If $n=0,$ then we get $(d+k)a^{d,k}=0,$ which implies  $a^{d,k}=0$ for $k\neq-d.$

\item[B.] If $n\neq0,$  then taking $d=0,$ we have  $a^{0,k}=0$ for  $k\neq 0.$
Then letting $k=n,$ we get $a^{d,n}=0$ for $d\neq 0,$ which implies   $a^{d,kn}=0$ for $k\in\mathbb{Z}_+\setminus\{0\}. $

Now, letting $k=tn+q$, we obtain 
\begin{enumerate}
    \item  $a^{d,k} ={(-1)^t{d^{-t}}\prod\limits_{p=1}^{t-1}(pn+q)}a^{d,q}$ for  $t\in\mathbb{Z}_+\setminus\{0\},$ $q=\overline{1,|n|-1},$ 
\item  $a^{d,k} ={d^t}\big(\prod\limits_{p=1}^{t}(pn-q)\big)^{-1}a^{d,q}$ for $t\in\mathbb{Z}_-\setminus\{0\},$ $q=\overline{0,|n|-1}.$ 

\end{enumerate}

\end{enumerate}

\item Next to determine the coefficients $b_{\alpha,i}^{d,k},$  we apply $\varphi_d$
to both side of (\ref{ad}) and obtain

\begin{equation}\label{n11}
\begin{array}{ll}
2\sum\limits_{k\in\mathbb{Z}} & \left((\beta-\alpha) b_{\alpha+\beta,i+j}^{d,k}+(j-i)b_{\alpha+\beta,i+j+n}^{d,k}\right)H_{\alpha+\beta+d,k}=\\[2mm]
& \sum\limits_{k\in\mathbb{Z}}b_{\beta,j}^{d,k}\left((\beta+d) H_{\alpha+\beta+d,i+k}+kH_{\alpha+\beta+d,i+k+n}\right)-\\[2mm]
&\sum\limits_{k\in\mathbb{Z}}b_{\alpha,i}^{d,k}\left((\alpha+d)H_{\alpha+\beta+d,j+k}+kH_{\alpha+\beta+d,j+k+n}\right).
\end{array}
\end{equation}

\noindent We start with taking $\beta=i=j=0$ in (\ref{n11}), it gives  
\begin{equation}\label{n12}
  (d-\alpha)b_{\alpha,0}^{d,k}+(k-n)b_{\alpha,0}^{d,k-n}=
  db_{0,0}^{d,k}+(k-n)b_{0,0}^{d,k-n}.
\end{equation} 
\noindent Taking $\beta=-\alpha,$ $i=j=0$ in (\ref{n11}) with  (\ref{n12}) we have 
$b_{\alpha,0}^{d,k}+b_{-\alpha,0}^{d,k}=2b_{0,0}^{d,k}.$
In the equation (\ref{n12}) replacing $\alpha$ with $-\alpha$
\begin{equation} \label{n13}
 (d+\alpha)b_{-\alpha,0}^{d,k}+(k-n)b_{-\alpha,0}^{d,k-n}=
  db_{0,0}^{d,k}+(k-n)b_{0,0}^{d,k-n}
\end{equation}
and adding the equations (\ref{n12}) and (\ref{n13})  we deduce 
$b_{\alpha,0}^{d,k}=b_{0,0}^{d,k},$ for all $\alpha \in G.$
Putting $\beta=-\alpha,$ $j=0$ and 
$\beta=j=0$ in (\ref{n11})  we have 
\begin{equation}\label{n23}
(d+\alpha)b_{\alpha,i}^{d,k}-4\alpha b_{0,i}^{d,k}-2ib_{0,i+n}^{d,k}=(d-\alpha) b_{0,0}^{d,k-i}-(k-n)b_{\alpha,i}^{d,k-n}  +(k-i-n)b_{0,0}^{d,k-i-n}    
\end{equation}
\begin{equation}\label{n24}
(d-\alpha)b_{\alpha,i}^{d,k}-2ib_{\alpha,i+n}^{d,k}=d b_{0,0}^{d,k-i}-(k-n)b_{\alpha,i}^{d,k-n}  +(k-i-n)b_{0,0}^{d,k-i-n}.    
\end{equation}
Subtracting (\ref{n23}) from (\ref{n24}), it gives this relation
\begin{equation}\label{n25}
4\alpha b_{0,i}^{d,k}-2\alpha b_{\alpha,i}^{d,k}-\alpha b_{0,0}^{d,k-i}=2ib_{\alpha,i+n}^{d,k}-2ib_{0,i+n}^{d,k}.   \end{equation}
Next taking $\beta=i=0$ in (\ref{n11}) and substituting $j$ with $i$ on it with $\alpha=0$ in 
(\ref{n23}), then combining them we can get 
\begin{equation}\label{n26}
2\alpha b_{\alpha,i}^{d,k}-\alpha b_{0,0}^{d,k-i}=2ib_{\alpha,i+n}^{d,k}-2ib_{0,i+n}^{d,k}.   \end{equation}
Subtracting (\ref{n25}) from (\ref{n26}) we deduce this identity
$b_{\alpha,i}^{d,k}=b_{0,i}^{d,k},$
 from this with together (\ref{n26} we have 
 \begin{equation}\label{n28}
  2b_{0,i}^{d,k}=b_{0,0}^{d,k-i},   
 \end{equation}
for all $k\in \mathbb{Z}.$ Finally, by (\ref{n23}) and (\ref{n28}) we have 
$b_{0,0}^{d,k-i}=0$ for $k\in \mathbb{Z}.$

\end{enumerate}
Thus, it proves that $\varphi_d$ has the form 
\begin{longtable}{lclcl}
$\varphi_d(L_{\alpha,i})$&$=$&$\sum_{k\in\mathbb{Z}}a^{d,k-i}L_{\alpha+d,k}$&$=$&$\sum_{k\in\mathbb{Z}}a^{d,k}L_{\alpha+d,k+i}$\\ 
$\varphi_d(H_{\alpha,i})$&$=$&$\sum_{k\in\mathbb{Z}}a^{d,k-i}H_{\alpha+d,k}$&$=$&$\sum_{k\in\mathbb{Z}}a^{d,k}H_{\alpha+d,k+i}.$
\end{longtable}

\end{proof}

\begin{theorem} There are no non-trivial transposed Poisson  structures defined on $HW_n(G).$
\end{theorem}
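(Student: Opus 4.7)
The plan is to apply Lemma~\ref{l1} together with Theorem~\ref{main3}. Any transposed Poisson product $\cdot$ on $HW_n(G)$ yields, for every basis element $X$, a $\frac{1}{2}$-derivation $\varphi_X \colon Y \mapsto X \cdot Y$, and by Theorem~\ref{main3} every $\varphi_X$ preserves the decomposition $HW_n(G) = \mathcal L \oplus \mathcal H$, where $\mathcal L := \langle L_{\alpha, i} \ | \ \alpha \in G, i \in \mathbb{Z}\rangle$ and $\mathcal H := \langle H_{\alpha, i} \ | \ \alpha \in G, i \in \mathbb{Z}\rangle$, and acts on the two summands with the same coefficient data. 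The goal is to show that each of the three products $\mathcal L \cdot \mathcal L$, $\mathcal L \cdot \mathcal H$, $\mathcal H \cdot \mathcal H$ vanishes.

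For $\mathcal L \cdot \mathcal H$, commutativity of $\cdot$ gives $\varphi_{H_{\alpha, i}}(L_{\beta, j}) = \varphi_{L_{\beta, j}}(H_{\alpha, i})$; the left-hand side lies in $\mathcal L$ and the right-hand side in $\mathcal H$, so both vanish. For $\mathcal L \cdot \mathcal L$, applying the transposed Poisson identity to $z = L_{\alpha, i}$, $x = H_{\gamma, k}$, $y = L_{\beta, j}$, and using both $[H, L] \subset \mathcal H$ and $\mathcal L \cdot \mathcal H = 0$, one obtains $[H_{\gamma, k}, L_{\alpha, i} \cdot L_{\beta, j}] = 0$ for every $(\gamma, k)$. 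A short computation on $\mathcal L$-components shows that no nonzero element of $\mathcal L$ commutes with every $H_{\gamma, k}$, so the $\mathcal L$-valued quantity $L_{\alpha, i} \cdot L_{\beta, j}$ must be zero.

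The remaining case $\mathcal H \cdot \mathcal H$ is the delicate one, since here the factors and their product all lie in $\mathcal H$, so no type argument can help. For this, I would apply the identity to $z = H_{\alpha, i}$, $x = L_{\beta, j}$, $y = H_{\gamma, k}$, observing that $[H_{\alpha, i} \cdot L_{\beta, j}, H_{\gamma, k}] = 0$ by the first step. Expanding $\varphi_{H_{\alpha, i}}$ in the form given by Theorem~\ref{main3} and matching coefficients of each $H_{\sigma, \tau}$ leads to a relation of the form
\begin{equation*}
(\gamma - d)\, a^{d, m}_{H_{\alpha, i}} + (k - m + n)\, a^{d, m - n}_{H_{\alpha, i}} = 0,
\end{equation*}
with an appropriate simplification when $n = 0$, where the two summands of $[L_{\beta, j}, H_{\gamma, k}]$ collapse into $(\gamma + k) H_{\beta + \gamma, j + k}$ and one instead chooses a test pair with $\gamma + k \neq 0$. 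Because $\gamma$ ranges over the infinite set $G$ and $k$ over $\mathbb Z$, varying them independently forces $a^{d, m}_{H_{\alpha, i}} = 0$ for every $(d, m)$, and hence $H_{\alpha, i} \cdot H_{\gamma, k} = 0$. The key structural lever is the interaction of the two summands in $[L, H]$ with independently varying scalars $\gamma$ and $k$, producing two independent linear relations that jointly eliminate every coefficient; once that is established, all three types of products vanish and the transposed Poisson structure is trivial.
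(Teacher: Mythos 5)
Your argument is correct, and its first step coincides with the paper's: both invoke Lemma~\ref{l1} to turn each left multiplication into a $\frac{1}{2}$-derivation, and both use commutativity together with the fact that $\varphi_{H_{\beta,j}}(L_{\alpha,i})$ lies in $\mathcal L=\langle L_{\alpha,i}\rangle$ while $\varphi_{L_{\alpha,i}}(H_{\beta,j})$ lies in $\mathcal H=\langle H_{\alpha,i}\rangle$ to conclude that all the coefficients $a^{d,k}_{L_{\alpha,i}}$ and $a^{d,k}_{H_{\beta,j}}$ vanish. Where you diverge is in what happens next: you treat $\mathcal L\cdot\mathcal L$ and $\mathcal H\cdot\mathcal H$ as separate problems and attack each with the transposed Poisson compatibility identity (a centralizer argument for $\mathcal L\cdot\mathcal L$, and a coefficient-matching computation with $\gamma$ and $k$ varying independently for $\mathcal H\cdot\mathcal H$). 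Both of these computations check out, including your handling of the degenerate case $n=0$ where only the coefficients $a^{d,-d}$ survive and one needs $\gamma+k\neq 0$. But they are unnecessary: you yourself note at the outset that Theorem~\ref{main3} forces every $\frac{1}{2}$-derivation to act on $\mathcal L$ and on $\mathcal H$ with \emph{one and the same} coefficient family $\{a^{d,k}\}$, and this is exactly the lever the paper pulls. Once the mixed-product comparison gives $a^{d,k}_{L_{\alpha,i}}=a^{d,k}_{H_{\beta,j}}=0$ for all $d,k$, the maps $\varphi_{L_{\alpha,i}}$ and $\varphi_{H_{\beta,j}}$ are identically zero on the whole algebra, so $L_{\alpha,i}\cdot L_{\beta,j}=0$ and $H_{\alpha,i}\cdot H_{\beta,j}=0$ follow for free, with no further use of the compatibility condition. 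Your route buys nothing extra here beyond robustness (it would still work if the $L$- and $H$-coefficients were a priori independent), at the cost of two additional computations; the paper's route is the one-line consequence of the structure theorem you already quoted.
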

\begin{proof} 
 We aim to describe the multiplication $\cdot.$ By Lemma \ref{l1}, for every element  $X\in \ \{L_{\alpha, i}, \   H_{\alpha, i} \ | \ \alpha \in G, \ i \in \mathbb{Z}\},$ there is a related
$\frac{1}{2}$-derivation $\varphi_{l}$ of $HW_n(G)$ such that 
$\varphi_{X}(Y)=X\cdot Y.$ 
Thanks to Theorem \ref{main3}, we have a description of $\frac{1}{2}$-derivations  of $HW_n(G).$ 
Now we consider  the multiplication $L_{\alpha,i}\cdot H_{\beta,j} .$

\begin{longtable}{rcl}
$\varphi_{H_{\beta,j}}(L_{\alpha,i})\ =\ L_{\alpha,i} \cdot H_{\beta,j}$ & $=$&$H_{\beta,j}\cdot L_{\alpha,i}\ =\ \varphi_{L_{\alpha,i}}(H_{\beta,j}),$\\

$\sum\limits_{d\in G\setminus\{0\}}\sum\limits_{k\in\mathbb{Z}}a_{H_{\beta,j}}^{d,k-i}L_{\alpha+d,k}
$ & $=$ & $\sum\limits_{d\in G\setminus\{0\}}\sum\limits_{k\in\mathbb{Z}}a_{L_{\alpha,i}}^{d,k-j}H_{\beta+d,k}.$
\end{longtable}
\noindent
Thus, we get $a_{L_{\alpha,i}}^{d,k}=a_{H_{\beta,j}}^{d,k}=0$.
So, it gives  $L_{\alpha,i}\cdot L_{\beta,j}=H_{\alpha,i}\cdot H_{\beta,j}=0$ 
and $(HW_n(G),\cdot)$ is trivial. 

\end{proof}


\begin{thebibliography}{22}

\bibitem{aae23}
 Abdurasulov K.,
 Adashev J.,
 Eshmeteva S.,
 Transposed Poisson structures on solvable Lie algebras with filiform nilradical, arXiv:2401.04443.


 
 

\bibitem{bai20}
 Bai C., Bai R., Guo L., Wu Y.,
Transposed Poisson algebras, Novikov-Poisson algebras, and 3-Lie algebras,  Journal of Algebra, 632 (2023), 535--566. 
 

 
 


 \bibitem{lb15}
  Chen Q.,  Han J.,  Su Y., 
Lie bialgebras of not-finitely graded Lie algebras related to generalized Virasoro algebras, 
International Journal of Mathematics, 25 (2014),  5, 1450049.

 \bibitem{chs15}
Chen Q.,  Han J.,  Su Y., 
Structures of not-finitely graded Lie algebras related to generalized Virasoro algebras,  
Communications in Algebra, 43 (2015), 7, 3033--3049.




 
 \bibitem{dzhuma}
 Dzhumadil'daev  A.,
Weak Leibniz algebras and transposed Poisson algebras,
arXiv:2308.15018.  


 \bibitem{fzy15}
Fan G.,  Zhou C.,  Yue X., 
Structures of not-finitely graded Lie algebras related to generalized Heisenberg-Virasoro algebras,
Acta Mathematica Sinica, English Series, 31 (2015), 10, 1517--1530.
 
 \bibitem{fer23}
 Fernández Ouaridi A.,
On the simple transposed Poisson algebras and Jordan superalgebras,
 Journal of Algebra, 641 (2024), 173--198.   

 
 
 
\bibitem{FKL}
Ferreira B. L. M., Kaygorodov I., Lopatkin V., $\frac{1}{2}$-derivations of Lie algebras and transposed Poisson algebras, 
Revista de la Real Academia de Ciencias Exactas, Físicas y Naturales. Serie A. Matemáticas, 115 (2021), 3, 142. 
 
  \bibitem{gpw}
Gao S.,   Pei Yu.,   Wang H., 
Derivations of the nongraded Witt algebra with coefficients in tensor modules,
Linear and Multilinear Algebra, 70 (2022),  3, 465--489.


   \bibitem{h94}
  Henkel M., Schrödinger invariance and strongly anisotropic critical systems, Journal of Statistical Physics, 75 (1994), 1023.
  
 

  

  \bibitem{jt20}
 Jiang J.,   Tang X.,   
 Biderivations of the deformative Schrödinger-Virasoro algebras, 
 Communications in Algebra, 48 (2020), 2, 609--624.


  \bibitem{jw15}
Jiang Q.,   Wang S.,  
Derivations and automorphism group of original deformative Schrödinger-Virasoro algebra, 
Algebra Colloquium, 22 (2015),   3, 517--540.


 
 
  \bibitem{k23}
  Kaygorodov I.,     
Non-associative algebraic structures: classification and structure, 
Communications in Mathematics,  32 (2024), 3, 1--62.
		

 
\bibitem{kk22}
Kaygorodov I., Khrypchenko M., 
		Transposed Poisson structures on Block Lie algebras and superalgebras, 
 Linear Algebra and Its Applications,
   656 (2023), 167--197.


\bibitem{kk23}
Kaygorodov I., Khrypchenko M., 
Transposed Poisson structures on Witt-type algebras,
 Linear Algebra and its Applications,    665  (2023),  196--210.	


\bibitem{kkg23}
Kaygorodov I., Khrypchenko M., 
Transposed Poisson structures on generalized Witt algebras and Block Lie algebras,    Results in Mathematics, 78 (2023),  5,  186.







\bibitem{kkh24}
  Kaygorodov I., Khudoyberdiyev A., 
	Transposed Poisson structures  on solvable and perfect Lie algebras,
 Journal of Physics. A. Mathematical and Theoretical,  57
 (2024), 3, 035205.


 

\bibitem{lb23}
Liu G.,  Bai C.,
  A bialgebra theory for transposed Poisson algebras via anti-pre-Lie
  bialgebras and anti-pre-Lie-Poisson bialgebras, 
  Communications in Contemporary Mathematics, 2023, DOI: 10.1142/S0219199723500505.


   \bibitem{25}
Lou S., Symmetry algebras of the potential Nizhnik-Novikov-Veselov model, 
Journal of Mathematical Physics, 35 (1994), 4, 1755--1762


   \bibitem{p98}
Passman D.,  
Simple Lie algebras of Witt type,  
Journal of Algebra, 206 (1998),   2, 682--692.
 
   \bibitem{ru06}
Roger C.,   Unterberger J., 
The Schrödinger-Virasoro Lie group and algebra: representation theory and cohomological study,
Annales Henri Poincaré, 7 (2006),  7-8, 1477--1529.


   

   \bibitem{kms}
Sartayev B., 
Some generalizations of the variety of transposed Poisson algebras,
Communications in Mathematics, 32 (2024),  2, 55--62.

  \bibitem{su} 
  Su Y., Zhao K., Second cohomology group of generalized Witt type Lie algebras and certain representations,
  Communications in Algebra,  30 (2002), 7,  3285--3309.
  



 



   \bibitem{wl12} 
   Wang W.,   Li J.,   Xu Y., 
Derivations and automorphisms of twisted deformative Schrödinger-Virasoro Lie algebras,
Communications in Algebra, 40 (2012), 9, 3365--3388.


    \bibitem{xt21}
Xu P.,   Tang X., 
Graded post-Lie algebra structures and homogeneous Rota-Baxter operators on the Schrödinger-Virasoro algebra, 
Electronic Research Archive, 29 (2021),   4, 2771--2789.


    \bibitem{xu97}
Xu X., 
Novikov-Poisson algebras, 
Journal of Algebra, 190 (1997),   2, 253--279.

\bibitem{ytk} 
Yang Ya.,  Tang X.,  Khudoyberdiyev A.,	
Transposed Poisson structures on Schrödinger algebra in $(n+1)$-dimensional space-time, arXiv:2303.08180.
	\bibitem{Yu97}
		Yu  R. W. T.,
		Alg\`{e}bre de Lie de type Witt,
		Communications in Algebra, 25 (1997),  5, 1471--1484.

  
    \bibitem{yh21}
Yuan L.,  Hua Q., 
$\frac{1}{2}$-(bi)derivations and transposed Poisson algebra structures on Lie algebras,
 Linear and Multilinear Algebra, 70 (2022),   22, 7672--7701.
 
   
		\bibitem{zz} Zohrabi A., Zusmanovich P.,  
  A $\delta$-first Whitehead lemma for Jordan algebras, 
  Communications in Mathematics,  33 (2025), 1, 2.
	
		

		

 
 
 
 


 
 
 

\end{thebibliography}
\end{document}